\newcommand{\R}{\mathbb{R}}
 \newcommand{\C}{\mathbb{C}}
\newcommand{\I}{\mathbb{I}}
\newcommand{\settc}[2]{\bigl\{\,#1 \bigm\vert #2\,\bigr\}}
\newcommand{\cfstr}{{\rm{e}^2}}
\newcommand{\al}{\boldsymbol{\alpha}}
\newcommand{\A}{{\boldsymbol{A}}}
\newcommand{\weakly}{\rightharpoonup}
\newcommand{\fw}{U_{_{\text{FW}}}}
\DeclareMathOperator{\RE}{Re}
\DeclareMathOperator{\spann}{span}
\newtheorem{thm}[equation]{Theorem}
\newtheorem{prop}[equation]{Proposition}
\newtheorem{lem}[equation]{Lemma}
\theoremstyle{remark}
\newtheorem{rem}[equation]{Remark}
\numberwithin{equation}{section}
\title[Solitary wave  of the Maxwell-Dirac equations]{A normalized  solitary wave  solution  of 
\\
the Maxwell-Dirac  Equations}
\author{Margherita Nolasco} 
\email[Nolasco]{nolasco@univaq.it}
\address[Nolasco]{Dipartimento di Ingegneria e Scienze
dell'informazione e Matematica\\
Universit\`a dell'Aquila\\
via Vetoio, Loc.~Coppito\\
67010 L'Aquila (AQ) Italia}
\thanks{Research partially supported by MIUR grant PRIN 2015
2015KB9WPT, ``Variational methods, with applications to problems in
mathematical physics and geometry''.}
\begin{document}

\begin{abstract} 
 We prove the existence of a $L^2$-normalized  solitary wave solution for the Maxwell-Dirac  equations   in (3+1)-Minkowski  space.  In addition, for  the Coulomb-Dirac  model, describing  fermions  with attractive Coulomb interactions in the mean-field limit, we prove the existence of   the (positive) energy minimizer.   \end{abstract} 

\maketitle

{\it Mathematics Subject Classification:} 49S05; 81V10; 35Q60; 35Q51.

\section{Introduction and main results}

	The   Lagrangian  for a   charged,  spin-$\frac{1}{2}$ relativistic  particle  (here $\hbar = c =1$)  interacting with its own  electromagnetic field   is    given by
	
	\begin{equation*}
	\mathcal{L} = \bar{\Psi} ( i \gamma^{\mu} D_{\mu} - m) \Psi - \frac{1}{16 \pi} F_{\mu \nu} F^{\mu \nu}, 
	\end{equation*}
	
where we use the four-vector notations, $\mu, \nu \in \{ 0,1,2,3 \}$ and repeated index summation convention,  with metric tensor $g^{\mu \nu} = \text{diag} \{ 1, -1-1,-1 \}$ used to lower or raise the Lorentz indices. 
$ \gamma^{\mu}$  are the $4 \times    4$   Dirac matrices given by 
  $\gamma^{0} = 
	{\scriptstyle \begin{pmatrix} 
		{\mathbb{I}}_{_{2}} & 0 \\
		0 & - \mathbb{I}_{_2}
	\end{pmatrix} } $ and 
	$ \gamma^{k} =
	{\scriptstyle\begin{pmatrix} 
		0 & \sigma^k \\
		- \sigma^k & 0
	\end{pmatrix} }$,  $k=1,2,3$, 
and $\mathbf{\sigma}_k$ are the $2\times 2$- Pauli  matrices
\begin{equation*}
\sigma_1 = {\scriptstyle \begin{pmatrix} 
	0 & 1 \\
		1 & 0
	\end{pmatrix} } , \quad \sigma_2 = {\scriptstyle\begin{pmatrix} 
	0 & -i \\
		i & 0
	\end{pmatrix} }, \quad \sigma_3 = {\scriptstyle\begin{pmatrix} 
	1 & 0 \\
		-1& 0
	\end{pmatrix} }. 
\end{equation*}

 $\Psi $ is the   Dirac  spinor taking values in $\C^4$ and ${\bar {\Psi }} =  \Psi ^{\dagger }\gamma ^{0}$ is  the \ Dirac adjoint, with $\Psi ^{\dagger }$ the hermitian conjugate of $\Psi$; $m >0$ is the particle's mass, 
 $D_{\mu } = \partial _{\mu } + i  {\rm e} A_{\mu }  $  is the gauge covariant derivative, with  $ {\rm e}$  the  particle's charge ($e <0$ for the electron) and 
$A^{\mu } $ is the  electromagnetic 4-vector potential. $F_{\mu \nu } =\partial _{\mu }A_{\nu }-\partial _{\nu }A_{\mu } $ is the    electromagnetic tensor field.

	The   Euler-Lagrange equations  in the  Lorenz gauge ($ \partial _{\mu } A^{\mu} = 0$)  are given by the   Maxwell-Dirac equations
	
	\begin{equation} 
	\label{eq:MD_full}
	\tag{MD}
	\begin{cases}
	(i \gamma^{\mu}  \partial _{\mu } - e \, \gamma^{\mu} A_{\mu } )\Psi   - m \Psi  =  0 \\
	\partial_{\nu}   \partial^{\nu} A^{\mu} =  4 \pi \, j^{\mu}
	\end{cases}
	\end{equation}
	where $j^{\mu} =  e  \, \bar{\Psi} \gamma^{\mu}  \Psi $ is the  conserved Dirac current  ($ \ \partial_{\mu} j^{\mu} = 0$). 
 We look for   solutions of  \eqref{eq:MD_full}  stationary in time, localized  and $L^2$-normalized in space, called {\it solitary waves}, and which can be seen as     representations  of  the {\it extended particles}. 
 Numerical evidence of the existence of solitary wave solutions of  \eqref{eq:MD_full} was obtained in \cite{Lisi95}. The first  proof of the existence of solitary waves  is given by using variational methods (a linking argument) by  M. Esteban  V. Georgiev and E. S\'{e}r\'{e} in \cite{GeorgievEstebanSere96}.  They proved the existence of stationary solutions $ \Psi(t,x)  = {\rm e}^{ i  \omega t } \psi (x)$, for any $\omega \in (0, m)$, with $\psi$ smooth, and exponentially decreasing at infinity together with all its derivatives. This result was later generalized to   any $\omega \in (-m,m)$ in \cite{Abenda98}, using an axial symmetry ansatz on the class of solutions. Recently in \cite{ComechStuart2018} the authors prove the existence of solitary waves  using a perturbative approach. In fact they prove the existence of  a small amplitude stationary solution  which bifurcates (via Implicit Function theorem) from  the ground state of the   Choquard's equation (see \cite{Lieb77}).
  Let us remark that  both the variational approach used in 
\cite{GeorgievEstebanSere96} (and also in \cite{Abenda98})  and  the perturbative approach used in  \cite{ComechStuart2018}  do not  provide    solutions with  prescribed $L^2$-norm.  Aim of this paper is to find one such   $L^2$-normalized solution. We use a different variational characterization   for critcal points of the energy functional, inspired by the one used to characterize  the first eigenvalue of the Dirac operators with Coulomb-type potentials (see e.g. \cite{DolbeaultEstebanSere2000},  also \cite{CZN2019} for an application in the  nonlinear case) and we use of concentration-compactness-type arguments (see \cite{Lions84}).
Note that  in \cite{CZN2019}  the presence   of  an attractive external Coulomb potential  (the dominant focusing  term) allows one   to  recover compactness. 

 Let us also quote the article \cite{BuffoniEstebanSere2006} where
the authors study normalized solutions for a different problem
which also  has   a strongly indefinite structure. In that paper the authors use
a penalization method in the spirit of \cite{EstebanSere99}.

Our main result is the following. 

\medskip
 
{\bf Theorem}
	{\it There exists $ \omega \in  (0,m) $ and  $\psi \in H^{1/2}(\mathbb{R}^3; \mathbb{C}^4)$,  with $\|\psi \|^2_{L^{2}} = 1$,  such that 
	\begin{equation}
	\label{eq:solution}
	\begin{cases}
	\Psi(x^{0},x) =  {\rm e}^{ i  \omega x^{0}} \psi (x) \\
	A^{\mu} (x^{0},x) = A^{\mu} (x)  =   e  (\gamma^0  \psi,  \gamma^{\mu}  \psi)_{\C^4}  \ast \frac{1}{|x|}
	\end{cases}
	\end{equation}
	is  a solution of  \eqref{eq:MD_full}.}

	\medskip
	
As already mentioned we prove this result by using a variational characterization    of critical level of the energy functional  introduced  for  the first eigenvalue of Dirac operators. Indeed   
let $A= (A^0, \boldsymbol{A})$, with $\boldsymbol{A}= (A^1, A^2,A^3)$,  be the four -vector potential $A^{\mu}$, clearly $A^0= A_0 $ and $A^k=- A_{k}$, ($k=1,2,3$), 
and let  denote $ \beta  =  \gamma^0 $ and $\boldsymbol{\alpha} = (\alpha^{1} , \alpha^{2} , \alpha^{3} )$, with $ \alpha^{k} = \gamma^0 \gamma^k $ ($k=1,2,3$). 
Then  $(\Psi, A)$ is a  $L^2$-normalized  stationary  solution of \eqref{eq:MD_full}    of the form  \eqref{eq:solution}  if $ (\psi, \omega)$ is a solution of the following (nonlinear)
eigenvalue problem 

\begin{equation}
		\label{eq:eigenvalue_MD}
		\tag{$E_{\omega}$}
		\begin{cases} 
			( i \al \cdot \nabla - m  \beta) \psi  - e A_0  \psi + e \al \cdot  \A  \,  \psi  =  \omega \,  \psi \\
				 A_0(x) =   e \, |\psi|^{2} \ast \frac{1}{|x|}  ; \qquad  \A(x)  =    e   \, ( \psi ,  \al \psi )_{\C^4} \ast \frac{1}{|x|} \\
			\|\psi \|^2_{L^{2}} = 1 .
			\end{cases} 
		\end{equation}

We look for solutions of \eqref{eq:eigenvalue_MD} as (constrained)  critical points of the functional
\begin{equation*}
\mathcal{I}_{_{MD}} (\psi) = \int_{\R^3} (\psi, D\psi )_{\C^4} \, - \frac{e^2}{2} \int_{\R^3 \times \R^3} \frac{\rho_{\psi}(x)\rho_{\psi}(y) - J_{\psi}(x) \cdot J_{\psi}(y)}{|x-y| } dx dy
\end{equation*}
where  $D = i \al \cdot \nabla - m  \beta $   and $\rho_{\psi} = |\psi|^2$ and $J_{\psi} = ( \psi ,  \al \psi )$, 
on   the manifold
\begin{equation*}
\Sigma = \{ \psi \in H^{1/2}(\R^3, \C^4) \, : \, \|\psi \|^2_{L^{2}} = 1 \}.
\end{equation*}
Note also that in the units we choose ($\hbar = c =1$) the coupling constant ${\rm e}^2$ is, as a matter of fact,   the dimensionless
fine structure constant $\frac{e^{2}}{\hbar c} \approx \frac{1}{137}$. 
The functional  $\mathcal{I}_{_{MD}}$ is strongly indefinite and presents a lack of compactness. Indeed, the  operator
$D =  i  \al \cdot \nabla -  m  \beta $  is a first order,   self-adjoint
	operator  on $H^1(\mathbb{R}^3 , \mathbb{C}^4)$   with purely absolutely continuous spectrum given by
  \begin{equation*}
  \sigma(D) = (-\infty, -m] \cup [m, +\infty). 
  \end{equation*}
  		Let  $\Lambda_{\pm}(D)  $ be the  two infinite rank  orthogonal projectors on  the positive/negative 
    energies subspaces, then  
    \begin{equation*}
	D \Lambda_{\pm}(D)  = \Lambda_{\pm}(D)  D =  \pm  \sqrt{- \Delta + m}  \, \Lambda_{\pm}(D) = \pm  \Lambda_{\pm}(D) \,  \sqrt{- \Delta + m} , 
	    \end{equation*}
	   
hence for   $\psi \in H^{1/2}(\mathbb{R}^3 ; \mathbb{C}^4 )$  the operator form is given by
  
  \begin{equation*}  
  \int_{\R^3} (\psi, D \psi )_{\C^4}= \| (- \Delta + m)^{1/4}  \Lambda_{+}(D)  \psi \|^2_{L^2} - \|  (- \Delta + m)^{1/4}  \Lambda_{-}(D)  \psi \|^2_{L^2}
  \end{equation*}
   and we denote $X_{\pm}(D)  =  \Lambda_{\pm}(D)   H^{1/2}(\R^3; \C^4)$.

In fact we prove the existence of  the $L^2$-normalized solitary wave solution of  \eqref{eq:MD_full} by means of the following variational characterization. 

\begin{thm}
\label{thm:minmax}
Let define
\begin{equation*}
E =  \inf_{\substack{ W \subset X_{+}(D) \\ \dim W = 1}} \,
	 \sup_{\substack{ \phi \in W \oplus X_{-}(D)  \\  \|\phi\|_{L^2} = 1}} \mathcal{I}_{_{MD}} (\phi)
\end{equation*}
then $E \in (0,m) $ and it is attained, namely there exists $\psi \in \Sigma$ such that $ \mathcal{I}_{_{MD}} (\psi) = E $. Moreover, 
there exists $\omega \in (0,m) $ (Lagrange multiplier)  such that 
\begin{equation*}
d \mathcal{I}_{_{MD}}(\psi)[h] = 2 \omega \RE \langle \psi | h \rangle_{L^2},  \qquad \forall h \in H^{1/2}(\R^3, \C^4)
\end{equation*}
that is  $(\psi,w)  \in  H^{1/2}(\R^3, \C^4) \times (0,m)$ is a solution of \eqref{eq:eigenvalue_MD} and  
\begin{equation*}
	\begin{cases}
	\Psi(x^{0},x) =  {\rm e}^{ i  \omega x^{0}} \psi (x) \\
	A^{\mu} (x^{0},x) = A^{\mu} (x)  =   e  (\gamma^0  \psi,  \gamma^{\mu}  \psi)_{\C^4}  \ast \frac{1}{|x|}
	\end{cases}
	\end{equation*}
	is  a $L^2$- normalized  solitary wave solution of  \eqref{eq:MD_full}.
	
	In addition, $E$  is  the lowest positive  critical value  of the  functional  $\mathcal{I}_{_{MD}}$ on $\Sigma$. 
\end{thm}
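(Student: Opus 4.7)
My plan follows the Esteban--S\'er\'e variational framework for strongly indefinite Dirac-type functionals, combined with Lions' concentration--compactness to handle translation invariance. First I would fix $\psi_+\in X_+(D)$ with $\|\psi_+\|_{L^2}\le 1$ and analyze the inner sup $g(\psi_+):=\sup\{\mathcal{I}_{_{MD}}(\psi_++\psi_-):\psi_-\in X_-(D),\,\|\psi_+\|_{L^2}^2+\|\psi_-\|_{L^2}^2=1\}$. Using the spectral splitting $\int(\phi,D\phi)_{\C^4}=\|(-\Delta+m^2)^{1/4}\Lambda_+\phi\|_{L^2}^2-\|(-\Delta+m^2)^{1/4}\Lambda_-\phi\|_{L^2}^2$ together with the Kato--Herbst inequality $\iint |f|^2|g|^2/|x-y|\,dx\,dy\le C\|f\|_{H^{1/2}}^2\|g\|_{H^{1/2}}^2$, the map $\psi_-\mapsto\mathcal{I}_{_{MD}}(\psi_++\psi_-)$ is strictly concave, weakly upper semi-continuous and anticoercive (the quartic perturbation is controlled by the negative kinetic piece thanks to the smallness of $e^2\approx 1/137$), so it admits a unique maximizer $\psi_-^\ast(\psi_+)$ and $E=\inf\{g(\psi_+):\|\psi_+\|_{L^2}=1\}$. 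The lower bound $E>0$ then follows from $\int(\psi_+,D\psi_+)_{\C^4}\ge m\|\psi_+\|_{L^2}^2$ minus a Coulomb correction of order $e^2$, while the upper bound $E<m$ is obtained by testing with $\psi_+^\varepsilon$ constructed from a rescaled ground state of the Choquard equation, as in \cite{ComechStuart2018}, giving an expansion $g(\psi_+^\varepsilon)=m-\varepsilon^2\lambda+o(\varepsilon^2)$ with positive binding energy $\lambda$.

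Next I would establish attainment. Let $\{\psi_+^n\}$ be a minimizing sequence for $g$ on the $L^2$-unit sphere of $X_+(D)$ and set $\phi_n=\psi_+^n+\psi_-^\ast(\psi_+^n)\in\Sigma$. The coercivity from the reduction together with $E<m$ yields a uniform $H^{1/2}$ bound on $\{\phi_n\}$. Applying Lions' concentration--compactness principle to $|\phi_n|^2$, vanishing is excluded because it would drive $g(\psi_+^n)\to m>E$; dichotomy is ruled out by strict subadditivity of the minimax level, namely that splitting the $L^2$-mass $\mu=\mu_1+\mu_2$ between two bumps escaping to infinity strictly raises the minimax value, since the attractive $-\tfrac{e^2}{2}D(\rho,\rho)$ term of the Coulomb energy is strictly superadditive under concentration. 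Hence a translated subsequence converges weakly in $H^{1/2}$ and strongly in $L^p_{\mathrm{loc}}$ to some $\phi\in\Sigma$ realizing the minimax, and lower/upper semi-continuity of the relevant pieces yield $\mathcal{I}_{_{MD}}(\phi)=E$.

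Finally, the Lagrange multiplier rule on the constrained critical point produces $\omega\in\R$ satisfying \eqref{eq:eigenvalue_MD}. Testing this equation against $\psi$ and using the algebraic identity $\mathcal{I}_{_{MD}}(\psi)=\omega+\tfrac{e^2}{2}\bigl(D(\rho_\psi,\rho_\psi)-\sum_k D(J_\psi^k,J_\psi^k)\bigr)$ (with $D(f,g):=\iint f(x)g(y)|x-y|^{-1}\,dx\,dy$), combined with $E>0$ and the smallness of the Coulomb correction, gives $\omega>0$; the strict bound $\omega<m$ follows from a Persson-type characterization of the essential spectrum of the self-consistent operator $D-eA_0+e\boldsymbol{\alpha}\cdot\boldsymbol{A}$ (equal to $\sigma(D)$ by decay of the potentials) together with the exponential decay of $\psi$. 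If $\varphi\in\Sigma$ were a critical point with $0<\mathcal{I}_{_{MD}}(\varphi)<E$, then $\Lambda_+(D)\varphi\neq 0$ and one checks $\varphi=\Lambda_+(D)\varphi+\psi_-^\ast(\Lambda_+(D)\varphi)$, so that $W=\mathrm{span}(\Lambda_+(D)\varphi)$ would realize a minimax value strictly below $E$, contradicting its definition. The main obstacle in this plan is the dichotomy exclusion in the concentration--compactness step: because $E$ is an inf--sup rather than a pure minimum, strict subadditivity has to be proved for the reduced functional $g$ rather than directly for $\mathcal{I}_{_{MD}}$, which requires quantitative continuity of the map $\psi_+\mapsto\psi_-^\ast(\psi_+)$ under mass splitting and a delicate control of the positive-energy projections associated to two far-apart bumps.
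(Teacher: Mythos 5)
Your overall architecture --- a fiberwise maximization over $W\oplus X_{-}(D)$ followed by an infimum over the one-dimensional spaces $W$, with concentration--compactness for the outer problem --- is the same as the paper's, but two steps you rely on are not established, and one of them is the crux of the whole proof. First, the global strict concavity of $\psi_{-}\mapsto\mathcal{I}_{_{MD}}(\psi_{+}+\psi_{-})$ is not justified: the second derivative of the interaction term contains the contribution $+4\,\cfstr\int_{\R^3\times\R^3}\RE(\psi,\al k)(x)\cdot\RE(\psi,\al k)(y)\,|x-y|^{-1}dx\,dy$, which has the wrong sign for concavity and, via Kato's inequality, is only controlled in terms of $\|\psi\|_{H^{1/2}}\|k\|^{2}_{H^{1/2}}$; since $\|\psi_{-}\|_{H^{1/2}}$ is unbounded on the fiber $\mathcal{X}_{W}$, this cannot be absorbed into the good term $-2\|k_{-}\|^{2}_{H^{1/2}}$ uniformly. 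The paper proves only that the Hessian is negative definite \emph{at critical points of positive level} (Proposition \ref{prop:concavity}), where the a priori bound $\|\psi_{-}\|^{2}_{H^{1/2}}\le m\frac{\cfstr}{2}\gamma_{K}\|w\|^{2}_{H^{1/2}}$ of Proposition \ref{prop:sup_achieved} is available, and then obtains existence of the fiberwise maximizer from the Palais--Smale property (Proposition \ref{prop:PalaisSmale}) and uniqueness from a Mountain-Pass contradiction argument; you would need to supply an argument of this kind in place of the concavity claim.

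The decisive gap is the strict subadditivity used to exclude dichotomy, which you correctly identify as ``the main obstacle'' but leave unresolved. The paper's resolution is a specific mechanism absent from your plan: one introduces the family $\mathcal{I}^{(m)}$ with coupling $m\frac{\cfstr}{2}$, $m\in(0,1]$, sets $e(m)=\inf_{W}\sup_{\mathcal{X}_{W}}\mathcal{I}^{(m)}$ and $E(m)=m\,e(m)$; the estimate $\|\psi^{(m)}_{+}\|^{2}_{H^{1/2}}-\|\psi^{(m)}_{-}\|^{2}_{H^{1/2}}\ge 1$ at maximizers (Proposition \ref{prop:sup_achieved}(ii), which rests on the technical Lemma \ref{lem:key2}) yields $\theta\,(e(\theta m)-1)\le\theta^{2}(e(m)-1)$, and together with $e(m)<1$ (Lemma \ref{lem:key_negativa}) this gives $E(m)<E(m_{1})+E(m_{2})$ (Proposition \ref{prop:subadditivity}). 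One then needs Lemma \ref{lem:caratt_punti_critici}: each bump $\phi_{i}$ produced by the concentration--compactness decomposition, once normalized, is a critical point of $\mathcal{I}^{(m_{i})}$ with multiplier $\omega\in(0,1)$ and is therefore \emph{the} maximizer on its own fiber, so that $\mathcal{I}(\phi_{i})=m_{i}\,\mathcal{I}^{(m_{i})}(\psi_{i})\ge E(m_{i})$ and $E\ge\sum_{i}E(m_{i})$, contradicting subadditivity unless $p=1$. Without this chain the dichotomy alternative is not excluded and attainment does not follow. Two smaller points: $\omega<m$ is immediate from $\omega=\frac{1}{2}d\mathcal{I}_{_{MD}}(\psi)[\psi]\le\mathcal{I}_{_{MD}}(\psi)=E<m$ by \eqref{eq:stimaGES}, so no Persson-type spectral argument or decay estimate is needed; and your final assertion that any positive-level critical point $\varphi$ satisfies $\varphi=\Lambda_{+}(D)\varphi+\psi_{-}^{\ast}(\Lambda_{+}(D)\varphi)$ is precisely the content of Lemma \ref{lem:caratt_punti_critici} and is not something one merely ``checks''.
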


As a byproduct of the proof of Theorem \ref{thm:minmax}, we obtain also  an interesting  result  for the Coulomb-Dirac model,   describing  fermions  with attractive Coulomb interactions and 
that can be viewed as a semiclassical approximation of the (relativistically invariant) {\it polaron} model. We refer to  \cite{ComechZuvkov2013} for a detailed discussion of this model and its solitary waves 
and  to  \cite{Abenda98} for a multiplicity results of (not normalized) stationary solutions.  

Denoting    $H = - i \al \cdot \nabla + m  \beta = - D$, note that this is the operator usually called   the (free) Dirac operator,  clearly $\Lambda_{\pm}(H) = \Lambda_{\mp}(D)$ and  $X_{\pm}(H)  =  X_{\mp}(D)$, then we have the following result.

\begin{thm}
	\label{thm:polaron} 
	 There exists $ \omega \in  (0,m) $ and  $\psi \in H^{1/2}(\mathbb{R}^3; \mathbb{C}^4)$  solution of 
	 
	 \begin{equation}
		\label{eq:eigenvalue_CD}
		\begin{cases} 
			(- i \al \cdot \nabla + m  \beta) \psi  +  e A_0  \psi  =  \omega \,  \psi \\
				 A_0(x) =  -  e \,  |\psi|^{2} \ast \frac{1}{|x|}  \\
			\|\psi \|^2_{L^{2}} = 1 .
			\end{cases} 
		\end{equation}
	 	Moreover, 
	\begin{equation*}
	 \mathcal{I}_{_{CD}} (\psi) =  \inf_{\substack{ W \subset X_{+}(H) \\ \dim W = 1}} \,
	 \sup_{\substack{ \phi \in W \oplus X_{-}(H)  \\  \|\phi\|_{L^2} = 1}} \mathcal{I}_{_{CD}} (\phi) =  E  \in (0,m), 
	\end{equation*}
	 where
	\begin{equation*}
	\mathcal{I}_{_{CD}}(\phi)=  \int_{\R^3} (\phi, H \phi )_{\C^4}   - e^2 \int_{\R^3 \times \R^3} \frac{ |\phi|^2 (x)  |\phi|^2 (y) }{|x-y|} dx dy. 
	\end{equation*}
	In addition, $E$  is  the lowest positive  critical value  of the energy functional  $\mathcal{I}_{_{CD}}$. 
	\end{thm}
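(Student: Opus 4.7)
The plan is to mirror, in the Coulomb--Dirac setting, the min-max argument developed for Theorem~\ref{thm:minmax}, while exploiting two structural simplifications. First, since $H=-D$, the subspaces $X_\pm(H)=X_\mp(D)$ play the role of positive/negative spectral subspaces, with the labels simply swapped. Second, the nonlinear term is a \emph{single, strictly attractive} Coulomb self-interaction, without the sign-indefinite current--current contribution present in the Maxwell case. For the physical value $e^2\approx 1/137$, the Hardy--Kato inequality together with the dominant $-\|(-\Delta+m^2)^{1/4}\phi_-\|_{L^2}^2$ shows that $\mathcal{I}_{_{CD}}$ is strictly concave in the $\phi_-$ variable, so I would first reduce the min-max to a pure minimization: for each $\phi_+$ in the $L^2$-unit ball of $X_+(H)$, the supremum over $\{\phi_-\in X_-(H):\|\phi_++\phi_-\|_{L^2}=1\}$ is uniquely attained at some $\Phi(\phi_+)$, and $E=\inf_{\|\phi_+\|_{L^2}=1}\mathcal{I}_{_{CD}}(\phi_++\Phi(\phi_+))$.

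Next I would show $E\in(0,m)$: the bound $E<m$ comes from a trial function chosen in the Choquard-like regime of \cite{ComechStuart2018,Lieb77}, for which the attractive self-energy strictly lowers the kinetic threshold; the bound $E>0$ is immediate from $(\phi_+,H\phi_+)\geq m\|\phi_+\|_{L^2}^2$ and Hardy--Kato. The main obstacle, exactly as in Theorem~\ref{thm:minmax}, is the lack of compactness under translations. A minimizing sequence $\phi_n$ is bounded in $H^{1/2}$, and I would invoke Lions' concentration-compactness principle \cite{Lions84}: vanishing is ruled out because $E>0$ forces the Coulomb term to stay bounded away from zero along the sequence; dichotomy is ruled out by the strict subadditivity $E(1)<E(\theta)+E(1-\theta)$ for $\theta\in(0,1)$, which follows from the strictly attractive non-local nature of the Coulomb coupling (splitting two profiles off to infinity loses a positive cross-interaction energy). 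After a suitable translation $\phi_n\weakly\psi\neq 0$ in $H^{1/2}$, and a Brezis--Lieb argument in $L^2$ combined with weak lower semicontinuity of the kinetic form upgrades convergence to strong in $H^{1/2}$, identifying $\psi\in\Sigma$ as a minimizer.

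Finally, the constrained critical-point equation yields a Lagrange multiplier $\omega\in\R$ with $H\psi - e^2(|\psi|^2\ast|x|^{-1})\psi=\omega\psi$; pairing with $\psi$ gives $\omega = E - e^2\!\iint|\psi|^2(x)|\psi|^2(y)\,|x-y|^{-1}\,dx\,dy < E < m$. The positivity $\omega>0$ I would obtain by viewing $\omega$ as a discrete eigenvalue of the Coulomb-perturbed Dirac operator $H-e^2(|\psi|^2\ast|x|^{-1})$ below the essential spectrum $[m,+\infty)$: if $\omega\leq 0$ were possible, then through the min-max characterization one could produce a one-dimensional competitor $W\subset X_+(H)$ whose dressed sup is $\leq 0$, contradicting $E>0$. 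That $E$ is the \emph{lowest} positive critical value of $\mathcal{I}_{_{CD}}$ on $\Sigma$ follows from the variational formula itself, since any other positive critical point fits into the same class of competitors and thus provides a bound from below. The genuinely delicate point in the whole argument is the strict subadditivity used to exclude dichotomy, which is the exact analogue of the most technical step in the proof of Theorem~\ref{thm:minmax} and is where the attractive character of the Coulomb interaction is decisively used.
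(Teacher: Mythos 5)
Your overall architecture --- swap the roles of $X_{\pm}$, drop the current--current term, reduce the min-max to a minimization of the ``dressed'' functional over the positive sphere, and run concentration-compactness with strict subadditivity --- is exactly the paper's: Theorem \ref{thm:polaron} is proved there by rerunning the proof of Theorem \ref{thm:minmax} with $\psi_{\pm}$ exchanged and the $J\cdot J$ term deleted. However, there is a genuine gap at the step you yourself flag as the delicate one. You justify the strict subadditivity $E(1)<E(\theta)+E(1-\theta)$ by saying that ``splitting two profiles off to infinity loses a positive cross-interaction energy.'' In a translation-invariant problem this only yields the \emph{non-strict} inequality: the cross Coulomb interaction between two profiles whose centers separate tends to zero, so the energy of the split configuration converges exactly to the sum of the two bubble energies; nothing is lost in the limit, and dichotomy is not excluded. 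The paper's actual mechanism is a scaling in the coupling constant: it introduces the family $\mathcal{I}^{(m)}$ with interaction strength $m e^{2}$, proves $e(m)<1$ by testing with rescaled trial functions $v_{\epsilon}$ (Lemma \ref{lem:key_negativa}), and combines this with the lower bound $\|\psi^{(m)}_{+}\|^2_{H^{1/2}}-\|\psi^{(m)}_{-}\|^2_{H^{1/2}}\geq 1$ for the fiberwise maximizers (Proposition \ref{prop:sup_achieved}(ii)) to obtain $\theta\,(e(\theta m)-1)\leq\theta^{2}\,(e(m)-1)<\theta\,(e(m)-1)$, hence $E(\theta m)<\theta E(m)$ and then strict subadditivity. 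Your sketch contains no substitute for this, and without it the exclusion of $p\geq 2$ profiles does not go through.

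Two smaller soft spots. First, the claim that $E$ is the lowest positive critical value does not ``follow from the variational formula itself'': for a positive-level critical point $\phi$ with $W=\spann\{\phi_{+}/\|\phi_{+}\|_{L^2}\}$ the formula only gives $\sup_{\mathcal{X}_{W}}\mathcal{I}_{_{CD}}\geq E$, which is useless unless one also knows that $\phi$ \emph{attains} that supremum; that is precisely the content of Lemma \ref{lem:caratt_punti_critici}, which rests on the fiberwise strict concavity (Proposition \ref{prop:concavity}) plus a Mountain Pass contradiction giving uniqueness of the maximizer. Second, ``weak lower semicontinuity of the kinetic form'' is not available for the strongly indefinite quadratic part $\|\phi_{+}\|^{2}_{H^{1/2}}-\|\phi_{-}\|^{2}_{H^{1/2}}$; the paper instead exploits that the sequence is Palais--Smale (not merely minimizing) and uses the profile decomposition for bounded Palais--Smale sequences together with the convergence statement \eqref{eq:convergenceto0} to pass to the limit in each term separately.
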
	
Let us mention a related result obtained in  \cite{FrohlichJonssonLenzmann2007} where the authors prove the existence and orbital stability for  the $L^2$-normalized,  solitary wave solution, minimizer of the energy functional for   the pseudo-relativistic model describing  bosons with attractive Coulomb interactions.

\section{Notation and preliminary results}

From now on we take   $m=1$.  We denote by $ \hat{u}$ or
$\mathcal{F}(u)$ the Fourier transform of $u$, defined by extending the
formula
\begin{equation*} 
	\hat{u}(p) = \frac{1}{(2\pi)^{3/2}} \int_{\mathbb{R}^{3}} e^{-ip
	\cdot x} u(x) \, dx, \qquad \text{for } \, u \in
	\mathcal{S}(\mathbb{R}^{3}).
\end{equation*}
We denote
\begin{equation*} 
  \langle f | g \rangle_{H^{1/2}} = \int_{\R^3} \sqrt{|p|^2 + 1}
	(\hat{f}(p), \hat{g}(p))	 \, dp
	\end{equation*} 
 the scalar product in $H^{1/2}(\R^3, \C^4)$  with   $(\, \cdot \, ,  \cdot \, )$   the hermitian scalar product in $\C^4$.

Let $H = - i  \al \cdot \nabla +  \beta $   be  the (free) Dirac operator,        
in the (momentum) Fourier space we have 
the multiplication operator $\hat{H} (p) = \mathcal{F}  H
\mathcal{F}^{-1} = \al \cdot p + \beta$ which, for each $p \in
\R^3$, is an Hermitian $4 \times 4$-matrix with eigenvalues
\begin{equation*}
	\lambda_1(p) = \lambda_2(p) = - \lambda_3(p)= - \lambda_4(p) =
	\sqrt{|p|^2 + 1} \equiv \lambda(p).
\end{equation*}

The unitary transformation $U(p)$ which diagonalize $\hat{H}(p)$
is given explicitly by
\begin{align*}
	& U(p) = u_{+} (p) \I_{4} + u_{-} (p) \beta
	\frac{\al \cdot p}{|p|} \\
    &U^{-1}(p) = u_{+} (p) \I_{4} - u_{-} (p) \beta
    \frac{\al \cdot p}{|p|} = U^{\dag}(p)
\end{align*}
with $u_{\pm}(p) = \sqrt{\frac{1}{2}(1 \pm \frac{1}{\lambda(p)})}$. We
have
\begin{equation*}
	U(p)\hat{H}(p)U^{-1}(p) = \lambda(p) \beta = \sqrt{|p|^2 + 1}
	\, \beta.
\end{equation*}

Hence the  two orthogonal projectors $ \Lambda_{\pm}(H) $ on
$L^{2}(\mathbb{R}^{3}, \mathbb{C}^{4})$ are 
given by
\begin{equation}
	\label{eq:projectors}
	\Lambda_{\pm}(H) = \frac{1}{2} \mathcal{F}^{-1}U(p)^{-1} ( \I_{4} \pm
	\beta ) U(p)\mathcal{F}. 
\end{equation}
  We denote $X_{\pm}(H)  =  \Lambda_{\pm}(H)   H^{1/2}(\R^3; \C^4)$. Clearly we have $\Lambda_{\pm}(H) = \Lambda_{\mp}(D)$ and  $X_{\pm}(H)  =  X_{\mp}(D)$.

It may be useful  consider the Foldy-Wouthuysen (FW) transformation (see e.g. \cite{Thaller1992}), namely the
unitary transformation $\fw = \mathcal{F}^{-1} U(p) \mathcal{F}$. Note that 
under the FW transformation the projectors $\Lambda_{\pm}(H) $ become
simply
\begin{equation}
\label{eq:Foldy}
	\Lambda(H)_{\pm}^{^{(FW)}}  = \fw \Lambda_{\pm} (H)  \fw^{-1} = \frac{1}{2} ( \I_{4} \pm \beta). 
\end{equation}
Note that  $\Lambda(D )_{\pm}^{^{(FW)}} =  \frac{1}{2} ( \I_{4} \mp \beta)$.

We consider the smooth functional $\mathcal{I}\colon H^{1/2}(\R^3,\mathbb{C}^4) \to \R $ given by 
\begin{align*}
	\mathcal{I}  (\psi) =  \| \psi_{+} \|^2_{H^{1/2}} - \| \psi_{-}
	\|^2_{H^{1/2}} 
	-   \frac{\cfstr}{2}    \int_{\R^3 \times \R^3} \frac{ \rho_{\psi}
	(x) \rho_{\psi} (y) - J_{\psi}(x)
	\cdot J_{\psi} (y)} {|x -y |} dx dy
\end{align*}
where $ \psi_{\pm} = \Lambda_{\pm}(D) \psi $, $\rho_{\psi} =
|\psi|^2$ and $J_{\psi} = ( \psi , \al \psi )$.

 The Frech\'et derivative
 $ d \mathcal{I} (\phi) : H^{1/2}(\R^3,\mathbb{C}^4) \to \R$ is given by
\begin{align*}
	d \mathcal{I} (\psi) [h] = & 2 \RE \langle \psi_{+} | h_{+}
	\rangle_{H^{1/2}} - 2 \RE \langle \psi_{-} | h_{-} \rangle_{H^{1/2}}  \\
	& - 2 \cfstr \int_{\R^3 \times \R^3 } \frac{ \rho_{\psi} (x) \RE
	(\psi, h )(y) - J_{\psi} (x) \cdot \RE (\psi, \al
	h )(y)} { |x-y| } dx dy
\end{align*}
for  any  $h= h_{+} + h_{-} \in   H^{1/2}(\R^3,\mathbb{C}^4)$,  with $h_{\pm}  \in  X_{\pm}(D) $.

Clearly  $(\psi, \omega) \in H^{1/2}(\R^3,\mathbb{C}^4) \times \R $ is a weak solution of \eqref{eq:eigenvalue_MD}  if and only if
\begin{equation*}
	d\mathcal{I} (\psi) [h]= \omega  \, 2 \RE \,
	\left\langle \psi | h \right\rangle_{{L^2}}, 
	\qquad \forall h \in H^{1/2}(\R^3, \mathbb{C}^4).
\end{equation*} 
Hence we look for (constrained) critical points of $\mathcal{I}$ on   the manifold
 \begin{equation*}
\Sigma = \{ \psi \in H^{1/2}(\R^3, \C^4) \, : \, \|\psi \|^2_{L^{2}} = 1 \}.
\end{equation*}

\begin{rem}
	\label{rem:Hardy}
	Let us recall the following Hardy-type inequalities :
	\begin{description}
		\item[{\it Hardy}]   $\| |x|^{-1} \psi \|^2_{{L^2}} \leq 4 \| \nabla \psi \|^2_{{L^2}} $ for all $\psi \in H^1(\R^3)$; 
			
			\medskip
					
		\item[{\it Kato}]   $	\| |x|^{-\frac{1}{2}} \psi \|^2_{L^2} \leq \gamma_{K} \|(-\Delta)^{1/4} \psi \|^2_{L^2} $ for all $\psi \in H^{1/2}(\R^3) $, with  $\gamma_{K}  = \frac{\pi}{2}$.
	\end{description}
	Let us remark that $ e^2 \gamma_{K}  < \frac{1}{87}$. 
	
	In view of Kato's inequality  for any $\rho \in L^{1}(\mathbb{R}^{3})$ and $\psi \in
	H^{1/2}(\R^3,\mathbb{C}^4) $ we have
	\begin{equation}
	\label{eq:key_estimate}
		 \int_{\R^3 } (\rho \ast \frac{1}{|x|} ) | \psi |^2 (y) dy
		\leq  \gamma_{K}  \|\rho \|_{L^{1}} \|(-\Delta)^{1/4} \psi \|^2_{L^2}.
	\end{equation}
	
	\end{rem}
\begin{rem}
	\label{rem:conti_fourier}
	Since $ \mathcal{F}[ \frac{1}{|x|} ] =  \sqrt{ \frac{2}{\pi} } \frac{1}{ |p|^{2}} $, 
	 for any $f \in L^1 \cap L^{3/2}$ we have that 
	\begin{equation*}
	\label{eq:positive_Fourier}
		\int_{\R^3 \times \R^3} \frac{f(x) \bar{f}(y) }{|x-y|} dx dy  =
		4 \pi  \int_{\R^3} \frac{ |\hat{f}|^2(p) }{|p|^2}\, dp
		\geq 0.
	\end{equation*} 
	
	Hence in particular
	\begin{equation}
		\label{eq:correntePositiva}
		\int_{\R^3 \times \R^3 } \frac{ J_{\psi} (x) \cdot J_{\psi}
		(y) } { |x-y | } dx dy \geq 0 .
	\end{equation} 
Moreover 
	since $ |J_{\psi} | \leq \rho_{\psi}  $ for any $  \psi \in H^{1/2}(\R^3, \C^4)$, we have that
	\begin{equation}
	\label{eq:stimaGES}
		\int_{\R^3 \times \R^3 } \frac{ \rho_{\psi} (x) \rho_{\psi}
		(y) - J_{\psi} (x) \cdot J_{\psi} (y) } { |x-y| } dx
		dy \geq 0.
	\end{equation}
	
\end{rem}

\medskip

Moreover we have the following useful result (see the Appendix for the proof). 
\begin{lem} 
\label{lem:key2} 
For   any $\psi   = \psi_{+} + \psi_{-}  \in  \Sigma $,  let define $w = \frac{\psi_{+}}{ \| \psi_{+} \|_{L^2}}$    we have 
\begin{align*}
\int_{\R^3 \times \R^3}  &  \frac{ \rho_{\psi}
	(x) \rho_{\psi} (y)  -  J_{\psi}
	(x) \cdot  J_{\psi}(y)} {|x -y |}   dx dy  \geq 
	  \int_{\R^3 \times \R^3}   \frac{ \rho_{w}
	(x) \rho_{w} (y) -  J_{w} (x)  \cdot J_{w}(y)} {|x-y|} dx dy \\
	& -  8 \gamma_{K}   ( \|w\|^2_{H^{1/2} }- \|w\|^2_{L^2} )  - 10  \gamma_{K} (   \|\psi_{-} \|^2_{L^2} \|w\|^2_{H^{1/2} }  + \|  \psi_{-} \|^2_{H^{1/2}} ) .
\end{align*}
Moreover, if $v \in H^{1}(\R^3, \C^2)$,  with $\| v \|^2_{L^2} = 1$,  and  $\frac{\psi_{+}}{ \| \psi_{+} \|_{L^2}} = \fw^{-1} \left(  \begin{matrix} 0 \\ v  \end{matrix} \right)  $ we have 
\begin{align*}
\int_{\R^3 \times \R^3}  &  \frac{ \rho_{\psi}
	(x) \rho_{\psi} (y)  -  J_{\psi}(x) \cdot  J_{\psi}(y)} {|x-y|}   dx dy 
 \geq    \int_{\R^3 \times \R^3}   \frac{ \rho_{v}(x)  \rho_{v}(y) } {|x - y|} dx  dy  \\
 & -  8 \gamma_{K}   \| \nabla v \|^2_{L^2}  - 10  \gamma_{K} (   \|\psi_{-}\|^2_{L^2} \|v \|^2_{H^{1/2} }  + \|  \psi_{-} \|^2_{H^{1/2}} ) .
 \end{align*}
\end{lem}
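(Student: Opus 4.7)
My plan is to expand $\psi = \lambda w + \psi_-$ with $\lambda = \|\psi_+\|_{L^2}$, so that $\lambda^2 = 1 - \|\psi_-\|^2_{L^2}$, and to bound the cross terms in $\rho_\psi(x)\rho_\psi(y) - J_\psi(x)\cdot J_\psi(y)$ by repeated applications of \eqref{eq:key_estimate}. The pointwise expansions
\begin{align*}
\rho_\psi &= \lambda^2 \rho_w + 2\lambda\RE(w,\psi_-)_{\C^4} + \rho_{\psi_-},\\
J_\psi^k &= \lambda^2 J_w^k + 2\lambda\RE(w,\al^k\psi_-)_{\C^4} + J_{\psi_-}^k
\end{align*}
split the integrand into nine bilinear terms. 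Of these, I would keep the main one, $\lambda^4[\rho_w(x)\rho_w(y) - J_w(x)\cdot J_w(y)]$; discard the pure-$\psi_-$ contribution $\rho_{\psi_-}(x)\rho_{\psi_-}(y) - J_{\psi_-}(x)\cdot J_{\psi_-}(y) \geq 0$ and the symmetric pair $2\lambda^2[\rho_w(x)\rho_{\psi_-}(y) - J_w(x)\cdot J_{\psi_-}(y) + (x\leftrightarrow y)]$, both pointwise nonnegative by $|J_\phi|\leq\rho_\phi$ and the Cauchy--Schwarz inequality on $\R^3$-vectors; and estimate the three remaining indefinite cross terms containing $\RE(w,\al^\mu\psi_-)_{\C^4}$.

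For the $\lambda^4$ factor, the bound $\lambda^4 \geq 1 - 2\|\psi_-\|^2_{L^2}$ combined with \eqref{eq:key_estimate} (applied with $\rho = \rho_w$, $\|\rho_w\|_{L^1}=1$) gives
\begin{equation*}
\lambda^4 \int\int \tfrac{\rho_w\rho_w - J_w\cdot J_w}{|x-y|}\,dxdy \geq \int\int \tfrac{\rho_w\rho_w - J_w\cdot J_w}{|x-y|}\,dxdy - 2\gamma_K\|\psi_-\|^2_{L^2}\|w\|^2_{H^{1/2}},
\end{equation*}
a loss that fits inside $10\gamma_K\|\psi_-\|^2_{L^2}\|w\|^2_{H^{1/2}}$. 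The three indefinite cross terms I would control via the pointwise bounds $|\RE(w,\al^\mu\psi_-)_{\C^4}|\leq |w||\psi_-|$ and $|J_\phi|\leq\rho_\phi$, Young's inequality $|w||\psi_-|\leq \tfrac12(|w|^2+|\psi_-|^2)$, and \eqref{eq:key_estimate} with $\rho\in\{\rho_w,\rho_{\psi_-}\}$ chosen term by term. The resulting contributions distribute between the three budgets $10\gamma_K\|\psi_-\|^2_{H^{1/2}}$, $10\gamma_K\|\psi_-\|^2_{L^2}\|w\|^2_{H^{1/2}}$, and $8\gamma_K(\|w\|^2_{H^{1/2}}-\|w\|^2_{L^2})$. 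The last budget is essential for the linear-in-$\psi_-$ cross term, which leaves a piece of order $\gamma_K\|w\|^2_{H^{1/2}}$ that does not scale with $\|\psi_-\|$.

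For the second inequality, I would introduce the ``non-relativistic'' spinor $w_0(x) = \begin{pmatrix}0 \\ v(x)\end{pmatrix}\in L^2(\R^3;\C^4)$, which satisfies $\rho_{w_0}=\rho_v$ and $J_{w_0}^k = 0$ because $\al^k$ has off-diagonal $2\times 2$-block form. Using the explicit expression for $U^{-1}(p)$,
\begin{equation*}
\hat w(p)-\hat w_0(p) = \begin{pmatrix} -\tfrac{u_-(p)}{|p|}\sigma\cdot p\,\hat v(p) \\ (u_+(p)-1)\hat v(p) \end{pmatrix},
\end{equation*}
and the elementary inequalities $u_-(p)^2\leq |p|^2/4$ and $1-u_+(p)\leq |p|^2/4$ (both from $u_-^2 = \tfrac12(1-1/\lambda(p))\leq|p|^2/4$ and $1-u_+ = u_-^2/(1+u_+)\leq u_-^2$) yield $\|w-w_0\|^2_{L^2}\leq \tfrac12\|\nabla v\|^2_{L^2}$ together with comparable $H^{1/2}$-control. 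Expanding $\rho_w\rho_w - J_w\cdot J_w$ around $(\rho_v,0)$ and re-running the above cross-term estimates converts the kinetic budget $8\gamma_K(\|w\|^2_{H^{1/2}}-\|w\|^2_{L^2})$ into $8\gamma_K\|\nabla v\|^2_{L^2}$, producing the second inequality.

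The main obstacle is the fine constant bookkeeping needed to fit every contribution inside the specific coefficients $8$ and $10$; in particular, the linear-in-$\psi_-$ cross term demands a Cauchy--Schwarz estimate sharper than the crude Young bound, so that its $\gamma_K\|w\|^2_{H^{1/2}}$ residue genuinely fits inside the narrow budget $8\gamma_K(\|w\|^2_{H^{1/2}}-1)$. For the second inequality, the delicate point is translating the Fourier-space closeness of $w$ and $w_0$ into a clean $\|\nabla v\|^2_{L^2}$ correction with the stated constant, rather than a larger term involving $\|v\|^2_{H^{1/2}}$.
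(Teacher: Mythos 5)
Your decomposition into nine bilinear pieces and the disposal of the sign-definite ones match the paper's first step, but the plan does not close at precisely the point you flag yourself: the cross term linear in $\psi_{-}$,
\begin{equation*}
4\int_{\R^3\times\R^3}\frac{\rho_{\psi_{+}}(x)\,\RE(\psi_{+},\psi_{-})(y)-J_{\psi_{+}}(x)\cdot\RE(\psi_{+},\al\,\psi_{-})(y)}{|x-y|}\,dx\,dy .
\end{equation*}
No pointwise bound of the type $|\RE(w,\psi_{-})|\le|w|\,|\psi_{-}|$ followed by Cauchy--Schwarz or Young can handle it: the best such estimate, via \eqref{eq:key_estimate}, is $\gamma_{K}\|w\|_{H^{1/2}}\|\psi_{-}\|_{H^{1/2}}$, which is \emph{linear} in $\|\psi_{-}\|_{H^{1/2}}$ with a prefactor bounded below by $\gamma_{K}$ (since $\|w\|_{L^2}=1$). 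The allowed error budget contains only terms quadratic in $\psi_{-}$ plus the kinetic excess $8\gamma_{K}(\|w\|^2_{H^{1/2}}-\|w\|^2_{L^2})$, which can be arbitrarily small; after Young, any residue proportional to $\|w\|^2_{H^{1/2}}\ge 1$ is therefore fatal. (It would also destroy the only use of the second inequality, in Lemma \ref{lem:key_negativa}, where after the scaling $v_{\epsilon}$ the error must be $O(\epsilon^2)$ so as to be beaten by the Coulomb term of order $\epsilon$.) Acknowledging that a ``sharper Cauchy--Schwarz'' is needed is not the same as producing one.

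The missing idea is that $w\in X_{+}(D)$ and $\psi_{-}\in X_{-}(D)$ are \emph{pointwise orthogonal in momentum space}, so $(\hat w(p),\hat\psi_{-}(p))=0$ for every $p$ and the convolution $(\hat w(p-q),\hat\psi_{-}(q))$ is governed by difference terms. Writing $w=\fw^{-1}\left(\begin{smallmatrix}0\\ v\end{smallmatrix}\right)$, $\psi_{-}=\fw^{-1}\left(\begin{smallmatrix}\eta\\ 0\end{smallmatrix}\right)$ and using $U^{-1}(p)=u_{+}(p)\I_{4}-u_{-}(p)\beta\,\al\cdot p/|p|$, the paper shows that every surviving contribution carries either a factor $u_{-}(p-q)$, whose square against $|\hat v|^2$ integrates to $\tfrac12(\|w\|^2_{H^{1/2}}-\|w\|^2_{L^2})$, or a factor $|u_{-}(q)-u_{-}(p-q)|\le |p|\,(\lambda(p-q)+1)^{-1/2}(\lambda(q)+1)^{-1/2}$, whose extra power of $|p|$ cancels half of the Coulomb kernel $|p|^{-2}$ so that Hardy's inequality finishes the job. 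This cancellation is what converts the dangerous linear-in-$\psi_{-}$ term into $O(\|w\|^2_{H^{1/2}}-\|w\|^2_{L^2})+O(\|\psi_{-}\|^2)$; without it the constants $8$ and $10$ (indeed any constants) are out of reach. Your sketch of the second inequality inherits the same gap, and in addition the paper does not simply perturb around $\left(\begin{smallmatrix}0\\ v\end{smallmatrix}\right)$: it first passes through the auxiliary bound $\int\int\frac{\rho_{w}\rho_{w}-J_{w}\cdot J_{w}}{|x-y|}\ge\int\int\frac{(w,\beta w)(x)(w,\beta w)(y)}{|x-y|}$ from \cite{GeorgievEstebanSere96} before comparing with $\rho_{v}$.
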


\medskip

Finally we  recall the following convergence result.  Let $ v \in
H^{1/2}$, $f_n, g_n, h_n$ bounded sequences in $H^{1/2}$ such that  one of
them converges weakly to zero in $H^{1/2}$, then we have (see for
example \cite[Lemma 4.1]{CZN2013})
\begin{equation}
	\label{eq:convergenceto0}
	\int_{\R^3 \times \R^3} \frac{ |f_n|(x) |g_n|(x) |v|(y ) |h_n|(y)
	}{|x-y|} dx dy \to 0,   \qquad \text{as} \, n \to + \infty. 
\end{equation}

\section{Maximization problem} 

We  introduce the family of functionals   $\mathcal{I}^{(m)} \colon H^{1/2}(\R^3,
\mathbb{C}^4) \to \R $, with  $m \in (0,1]$,   
\begin{align*}
	\mathcal{I}^{(m)}  (\psi) =  \| \psi_{+} \|^2_{H^{1/2}} - \| \psi_{-}
	\|^2_{H^{1/2}} 
	-  m \frac{\cfstr}{2}    \int_{\R^3 \times \R^3} \frac{ \rho_{\psi}
	(x) \rho_{\psi} (y) - J_{\psi}(x)
	\cdot J_{\psi} (y)} {|x -y |} dx dy,
\end{align*}
where $ \psi_{\pm} = \Lambda_{\pm} (D) \psi $, $\rho_{\psi} =
|\psi|^2$ and $J_{\psi} = ( \psi , \al \psi )$.  Clearly  $\mathcal{I} = \mathcal{I}^{(1)}$.

Our first step will be to maximize the family of  functionals   $\mathcal{I}^{(m)}$  on the space 
\begin{equation*}
	\mathcal{X}_{W} = \settc{ \psi \in  \Sigma }{\psi_{+} \in W },
\end{equation*}    
where  $W  \subset  X_{+}(D)  $ is a 1-dimensional vector space.

The tangent
space of $ \mathcal{X}_{W} $ at some point $\psi \in \mathcal{X}_{W}$
is the set
\begin{equation*}
	T_{\psi} \mathcal{X}_{W} = \settc{h \in W \oplus X_{-} (D)
	}{ \RE \langle \psi |
	h\rangle_{_{L^{2}} }= 0}
\end{equation*} 
and $\nabla_{\mathcal{X}_{W} }\mathcal{I}^{(m)} (\psi)$, the projection of
the gradient $\nabla \mathcal{I}^{(m)} (\psi)$ on
$T_{\psi}\mathcal{X}_{W} $,  is given by
\begin{equation*}
	\RE \langle \nabla_{\mathcal{X}_{W} }  \mathcal{I}^{(m)} (\psi) | h \rangle_{_{H^{1/2}}}= d\mathcal{I}^{(m)} (\psi)[h]  -  2 \omega(\psi) \RE \langle \psi | h\rangle_{_{L^{2}} }
\end{equation*}
for all $ h \in  W \oplus X_{-}(D) $ and $\omega(\psi) \in \mathbb{R}$ is such that
$\nabla_{\mathcal{X}_{W} }\mathcal{I}^{(m)} (\psi) \in T_{\psi} \mathcal{X}_{W}$.

Let us introduce 
\begin{equation*}
	\Sigma_{+} = \settc{ w \in  X_{+}(D) }{ \|w \| ^2_{L^2} = 1}, 
\end{equation*}  
then, from now on,  we characterize the 1-dimesional vector space $W \subset  X_{+}(D)$ as $W = \spann\{ w \}$, with $w \in \Sigma_{+}$.

We begin giving a result on Palais-Smale sequences of $\mathcal{I}^{(m)} $  on $\mathcal{X}_{W}$, in particular we prove that the Palais-Smale condition holds on $\mathcal{X}_{W}$ for  $\mathcal{I}^{(m)} $ at  the positive levels.

\begin{prop}
	\label{prop:PalaisSmale}
For  any $w  \in  \Sigma_{+}$  and  for any $m \in (0,1]$,  let $\{ \psi_{n}  \} \subset  \mathcal{X}_{W}$ be a Palais-Smale sequence
	of  $\mathcal{I}^{(m)} $  on $\mathcal{X}_{W}$, that is
	\begin{equation*}
	 \mathcal{I}^{(m)} (\psi_{n})  \to c \,\, \text{and} \,\,   \| \nabla_{\mathcal{X}_{W}}\mathcal{I}^{(m)} (\psi_{n})  \|_{H^{1/2}} \to 0 , \qquad  \text{as} \quad   n \to + \infty.
	 \end{equation*}
	 Then,  
	
	\begin{itemize}
	
	\item[(i)] $\{ \psi_{n} \}  \subset  \mathcal{X}_{W} $ is bounded in $H^{1/2}$;
	
	\item[(ii)]  $ \omega(\psi_n)$ is bounded and  $  \liminf_{n \to + \infty} \omega(\psi_n)( \| (\psi_{n})_{+} \|^2_{L^2}-   \|( \psi_{n})_{-} \|^2_{L^2} ) > 0 $;

	\item[(iii)] If $c > 0 $ than  $\liminf_{n \to + \infty}   \omega(\psi_n) > 0 $ and $\{ \psi_{n} \} $ is pre-compact in $H^{1/2}$.

	\end{itemize}
\end{prop}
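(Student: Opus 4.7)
The plan is to address the three conclusions sequentially, systematically exploiting the one-dimensionality of $W$, the nonnegativity \eqref{eq:stimaGES}, and the fine-structure smallness $\cfstr\gamma_K < 1/87$ inherited from Kato's inequality (Remark \ref{rem:Hardy}).

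For (i), write $(\psi_n)_+ = t_n w$ with $|t_n|^2 \le \|\psi_n\|^2_{L^2} = 1$; then $\|(\psi_n)_+\|^2_{H^{1/2}} \le \|w\|^2_{H^{1/2}}$, and dropping the nonnegative Coulomb--Breit term in the definition of $\mathcal{I}^{(m)}$ yields
\begin{equation*}
\|(\psi_n)_-\|^2_{H^{1/2}} \le \|(\psi_n)_+\|^2_{H^{1/2}} - \mathcal{I}^{(m)}(\psi_n) \le \|w\|^2_{H^{1/2}} - c + o(1),
\end{equation*}
which is bounded since $\mathcal{I}^{(m)}(\psi_n) \to c$.

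For (ii), I would test the Palais--Smale relation with two admissible directions in $W \oplus X_-(D)$. Taking $h = \psi_n$ and exploiting the $2$- and $4$-homogeneity of the two summands in $\mathcal{I}^{(m)}$ (using $\RE(\psi_n,\psi_n) = \rho_n$ and $\RE(\psi_n, \al\psi_n) = J_n$) yields
\begin{equation*}
\omega(\psi_n) = \mathcal{I}^{(m)}(\psi_n) - \frac{m\cfstr}{2} \int_{\R^3\times\R^3} \frac{\rho_n(x)\rho_n(y) - J_n(x)\cdot J_n(y)}{|x-y|} \, dx \, dy + o(1),
\end{equation*}
bounded by (i) and \eqref{eq:key_estimate}. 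Taking instead $h = (\psi_n)_+ - (\psi_n)_-$, the Hermitianity of $\al$ produces the algebraic cancellations
\begin{align*}
\RE(\psi_n, (\psi_n)_+ - (\psi_n)_-) & = |(\psi_n)_+|^2 - |(\psi_n)_-|^2,\\
\RE(\psi_n, \al((\psi_n)_+ - (\psi_n)_-)) & = ((\psi_n)_+, \al(\psi_n)_+) - ((\psi_n)_-, \al(\psi_n)_-),
\end{align*}
and the resulting Coulomb--Breit integral is bounded by $2\gamma_K \|\psi_n\|^2_{H^{1/2}}$ via two applications of \eqref{eq:key_estimate}. Since $2m\cfstr\gamma_K < 2/87$ and $\|\psi_n\|^2_{H^{1/2}} \ge 1$, this gives
\begin{equation*}
\omega(\psi_n) \bigl( \|(\psi_n)_+\|^2_{L^2} - \|(\psi_n)_-\|^2_{L^2} \bigr) \ge (1 - 2m\cfstr\gamma_K)\|\psi_n\|^2_{H^{1/2}} + o(1) > 0.
\end{equation*}

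For (iii), the estimate in (i) together with $\|(\psi_n)_-\|^2_{H^{1/2}} \ge 1 - |t_n|^2$ forces, from $c > 0$, the quantitative bound $|t_n|^2 \ge \delta_0^2 := (c+1)/(\|w\|^2_{H^{1/2}}+1)$. To promote the sign of the \emph{product} in (ii) to the sign of $\omega(\psi_n)$ itself, I would run a case split: in the branch $\|(\psi_n)_+\|^2_{L^2} \ge \|(\psi_n)_-\|^2_{L^2}$, (ii) directly gives $\omega(\psi_n) \ge 1 - 2m\cfstr\gamma_K > 0$; in the complementary branch, the additional test $h = (\psi_n)_+ \in W$ produces
\begin{equation*}
|t_n|^2 \bigl( \|w\|^2_{H^{1/2}} - \omega(\psi_n) \bigr) = m\cfstr X_n + o(1),
\end{equation*}
where $|X_n| \le \gamma_K \bigl( \|\psi_n\|^2_{H^{1/2}} + |t_n|^2\|w\|^2_{H^{1/2}} \bigr)$ by a Cauchy--Schwarz refinement of \eqref{eq:key_estimate}, and the smallness $m\cfstr\gamma_K < 1/87$ combined with the lower bound on $|t_n|$ contradicts $\omega(\psi_n) \le 0$. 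For precompactness, extract a weakly convergent subsequence $\psi_n \weakly \psi$ in $H^{1/2}$; since $W$ is finite-dimensional, $t_n \to t$ and $(\psi_n)_+ \to tw$ strongly. Writing $\phi_n := (\psi_n)_- - \psi_- \weakly 0$ in $X_-(D)$ and testing Palais--Smale with $h = \phi_n$, the weak convergence kills the $H^{1/2}$-cross term $\RE\langle\psi_- | \phi_n\rangle_{H^{1/2}}$ while \eqref{eq:convergenceto0} kills the Coulomb cross terms, leaving
\begin{equation*}
\|\phi_n\|^2_{H^{1/2}} + \omega(\psi_n) \|\phi_n\|^2_{L^2} = o(1);
\end{equation*}
since $\liminf \omega(\psi_n) > 0$, this forces $\phi_n \to 0$ strongly in $H^{1/2}$, hence $\psi_n \to \psi$ in $H^{1/2}$.

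The main obstacle is the strict positivity $\liminf \omega(\psi_n) > 0$: the identity in (ii) controls only the product $\omega(\psi_n) ( \|(\psi_n)_+\|^2_{L^2} - \|(\psi_n)_-\|^2_{L^2})$, so excluding the degenerate branch where this product is positive by virtue of two negative factors hinges on combining the quantitative lower bound $|t_n| \ge \delta_0$ extracted from $c > 0$ with the smallness of the coupling $\cfstr\gamma_K$.
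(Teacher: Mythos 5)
Parts (i), (ii) and the precompactness argument at the end of (iii) are correct and essentially identical to the paper's proof: same test directions $h=\psi_n$, $h=(\psi_n)_+-(\psi_n)_-$ and $h=(\psi_n)_--\psi_-$, the same algebraic cancellations, and the same use of \eqref{eq:key_estimate}, \eqref{eq:stimaGES} and \eqref{eq:convergenceto0}. The problem sits exactly where you flag "the main obstacle'': the strict positivity of $\omega(\psi_n)$. In the branch $\|(\psi_n)_+\|^2_{L^2}<\|(\psi_n)_-\|^2_{L^2}$ your contradiction does not close. Writing $\|(\psi_n)_+\|^2_{L^2}=|t_n|^2$, your identity under the assumption $\omega(\psi_n)\le 0$ only yields $(1-m\cfstr\gamma_{K})\,|t_n|^2\|w\|^2_{H^{1/2}}\le m\cfstr\gamma_{K}\,\|\psi_n\|^2_{H^{1/2}}+o(1)$, and the only control you have on $\|\psi_n\|^2_{H^{1/2}}$ is the bound from (i), which is of order $\|w\|^2_{H^{1/2}}$. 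Combined with $|t_n|^2\ge(c+1)/(\|w\|^2_{H^{1/2}}+1)$, the two inequalities are contradictory only when $\|w\|^2_{H^{1/2}}$ is at most a fixed multiple (roughly $87(c+1)$) of $c+1$; for a fixed $w$ with $\|w\|_{H^{1/2}}$ large and $c$ small both sides are perfectly compatible, so no contradiction arises. The defect is that your error term $m\cfstr\gamma_{K}\|\psi_n\|^2_{H^{1/2}}$ scales like $\|w\|^2_{H^{1/2}}$ while your lower bound on $|t_n|^2\|w\|^2_{H^{1/2}}$ is merely of order one.

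The missing observation --- which is how the paper proceeds, and which removes the need for any case split --- is that $c>0$ forces $\|(\psi_n)_-\|^2_{H^{1/2}}\le\|(\psi_n)_+\|^2_{H^{1/2}}$ for $n$ large (drop the nonnegative Coulomb--Breit term in $\mathcal{I}^{(m)}(\psi_n)\to c>0$). Hence $\|\psi_n\|^2_{H^{1/2}}\le 2\|(\psi_n)_+\|^2_{H^{1/2}}$, and the Kato/Cauchy--Schwarz bound on the Coulomb term in the test $h=(\psi_n)_+$ becomes $m\cfstr|X_n|\le 2\cfstr\gamma_{K}\|\psi_n\|_{H^{1/2}}\|(\psi_n)_+\|_{H^{1/2}}\le 4\cfstr\gamma_{K}\|(\psi_n)_+\|^2_{H^{1/2}}$, i.e.\ it is absorbed into the main term \emph{uniformly in} $\|w\|_{H^{1/2}}$. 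This gives $\omega(\psi_n)\|(\psi_n)_+\|^2_{L^2}\ge(1-4\cfstr\gamma_{K})\|(\psi_n)_+\|^2_{H^{1/2}}+o(1)\ge(1-4\cfstr\gamma_{K})\|(\psi_n)_+\|^2_{L^2}+o(1)$, and since your bound $|t_n|^2\ge(c+1)/(\|w\|^2_{H^{1/2}}+1)>0$ lets you divide, $\omega(\psi_n)\ge 1-4\cfstr\gamma_{K}+o(1)>0$ in all cases. With this correction the remainder of your step (iii) goes through as written.
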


\begin{proof}

(i)  Since $ \{ (\psi_{n})_{+}  \} \subset
	W$ and  $	\| (\psi_{n})_{+} \|^2_{L^2} \leq 1$,  we have $\|(\psi_{n})_{+}\|_{H^{1/2}} \leq  \|w\|_{H^{1/2}}$.
	 In view of \eqref{eq:stimaGES}
		we have
	\begin{equation*} 
		 \mathcal{I}^{(m)} (\psi_{n}) \leq \| (\psi_{n})_{+}	\|^2_{H^{1/2}} - \| (\psi_{n})_{-}  \|^2_{H^{1/2}} 
	\end{equation*}
	hence we get $\| (\psi_{n})_{-} \|^2_{H^{1/2}}  \leq  \|w \|^2_{H^{1/2}}  -   \mathcal{I}^{(m)} (\psi_{n})  \leq \|w \|^2_{H^{1/2}}  - c +  o(1) $.

\medskip

(ii) Since
	\begin{align*}
	 \omega(\psi_{n})	+ o(1) = &  \frac{1}{2} d\mathcal{I}^{(m)} (\psi_{n})[ \psi_{n} ] \\
	 =&  \mathcal{I}^{(m)} (\psi_{n})  -  m  \frac{\cfstr}{2} \int_{\R^3 \times \R^3}
		\frac{\rho_{\psi_n}(x) \rho_{\psi_{n}}(y) -  J_{\psi_n}(x)
		\cdot J_{\psi_{n}}(y) }{ |x-y|}  dx dy 
		 	\end{align*}
		by  \eqref{eq:key_estimate} and \eqref{eq:stimaGES} we have 
			\begin{equation*}
  \mathcal{I}^{(m)} (\psi_{n})  - m  \frac{\cfstr}{2} \gamma_{K}  \|\psi_{n} \|^2_{H^{1/2}}	\leq \omega(\psi_{n}) \leq  \mathcal{I}^{(m)} (\psi_{n}) .	
			\end{equation*}
 Then  since $\{\psi_{n}  \}$ is a  bounded sequence  in $H^{1/2}$  we conclude that   $ \omega(\psi_n)$ is  a  bounded sequence. Moreover,   we have 
	
	\begin{equation*}
		o(1)= d\mathcal{I}^{(m)} (\psi_{n})[ (\psi_{n})_{+} - (\psi_{n})_{-}  ] - 2 \omega(\psi_{n})
	   (	\| (\psi_{n})_{+} \|^2_{L^2} -  \|  (\psi_{n})_{-} \|^2_{L^2})
	\end{equation*}
	and since 
			$\RE (\psi_{+}+ \psi_{-}, \psi_{+}- \psi_{-})=|\psi_{+}|^{2} - |\psi_{-}|^{2} $
again by  \eqref{eq:key_estimate}  and \eqref{eq:stimaGES} we get 
		\begin{align*}
		& \omega(\psi_{n}) (\|(\psi_{n})_{+}  \|^2_{L^2} -  \|(\psi_{n})_{-} \|^2_{L^2} )+ o(1) =
		\| (\psi_{n})_{+} \|^2_{H^{1/2}} + \| (\psi_{n})_{-} \|^2_{H^{1/2}}  \\
		& \qquad - m  \cfstr \int_{\R^3 \times \R^3} \frac{\rho_{\psi_n}(x) ( |(\psi_{n})_{+} |^2 - |(\psi_{n})_{-} |^2)(y)}{ |x-y|}  dx dy \\
		& \qquad 
		+ m  \cfstr \int_{\R^3 \times \R^3} \frac{J_{\psi_n}(x)
		\cdot( \RE ((\psi_{n})_{+} , \al  (\psi_{n})_{+} )  - \RE ((\psi_{n})_{-} , \al  (\psi_{n})_{-} ) (y)}{ |x-y|} dx dy  \\
		&\geq  \| (\psi_{n})_{+} \|^2_{H^{1/2}} + \| (\psi_{n})_{-} \|^2_{H^{1/2}} 
	 -  2 m  \cfstr \int_{\R^3 \times \R^3}
		\frac{\rho_{\psi_n}(x) |(\psi_{n})_{+} |^2 (y)}{ |x-y|} dx dy \\
		&\geq (1 - 2\cfstr
		\gamma_{K})  \| (\psi_{n})_{+} \|^2_{H^{1/2}} +  \|(\psi_{n})_{-}  \|^2_{H^{1/2}} >  1 - 2\cfstr
		\gamma_{K}.
		\end{align*}
		
		\medskip
		
(iii)  If $c >0$ clearly $\| (\psi_{n})_{+}  \|^2_{H^{1/2}} \geq \| (\psi_{n})_{-}  \|^2_{H^{1/2}} $ for $n $ sufficiently large, and  by  \eqref{eq:key_estimate} we have 
\begin{align*}
 \omega(\psi_{n}) \|(\psi_{n})_{+}  \|^2_{L^2} & + o(1) =   \frac{1}{2} d\mathcal{I}^{(m)} (\psi_{n})[ (\psi_{n})_{+}  ]\\
		= & \|(\psi_{n})_{+} \|^2_{H^{1/2}}    - m  \cfstr \int_{\R^3 \times \R^3}
		\frac{\rho_{\psi_n}(x)  \RE (\psi_{n}, (\psi_{n})_{+} )(y)}{ |x-y|}  dx dy \\
		&  +m  \cfstr \int_{\R^3 \times \R^3} \frac{J_{\psi_n}(x)\cdot   \RE (\psi_{n}, \al (\psi_{n})_{+} )(y)}{ |x-y|} dx  dy \\
		\geq & \| (\psi_{n})_{+}  \|^2_{H^{1/2}} 	 -  2 \cfstr  \gamma_{K}  \|\psi_{n} \|_{H^{1/2}}  \|(\psi_{n})_{+} \|_{H^{1/2}}\\
			\geq & \| (\psi_{n})_{+}  \|^2_{H^{1/2}} 	 -  4 \cfstr  \gamma_{K}   \| (\psi_{n})_{+}  \|^2_{H^{1/2}} 
		\geq   (1 - 4\cfstr \gamma_{K})  \| ( \psi_{n})_{+}  \|^2_{H^{1/2}} .
		\end{align*}
		Hence we get  $ \omega(\psi_{n})  \geq (1 - 4\cfstr \gamma_{K}) + o(1) $.
 \medskip

 Since $ \{ (\psi_{n})_{+} \}  \subset W$ clearly $ (\psi_{n})_{+}   \to   \psi_{+} $ in $H^{1/2} $ (up to subsequence), moreover $  (\psi_{n})_{-}  \weakly  \psi_{-} $ weakly  in $H^{1/2} $ (up to subsequence) and since   $  \liminf_{n \to + \infty} \omega(\psi_n) > 0 $,  we have  

\begin{align*}
		& o(1)=  - \frac{1}{2} d\mathcal{I}^{(m)} (\psi_{n})[ (\psi_{n})_{-}  -  \psi_{-} ] + \omega(\psi_{n}) 
		  \|(\psi_{n})_{-} -  \psi_{-} \|^2_{L^2} \\
		 \geq  &   \| (\psi_{n})_{-} -   \psi_{-} \|^2_{H^{1/2} }  +  m  \cfstr \int_{\R^3 \times \R^3}
		\frac{(\rho_{\psi_n} - |J_{\psi_n}|)(x)  |(\psi_{n})_{-}   -   \psi_{-} |^2 (y)}{ |x-y|}  dx dy  + o(1)\\
		 \geq  &   \| (\psi_{n})_{-}  -   \psi_{-} \|^2_{H^{1/2} } + o(1). 
			\end{align*}
	and we may conclude that also $ ( \psi_{n})_{-}  \to   \psi_{-} $ strongly  in $H^{1/2} $.

	\end{proof}
	It turns out that all the critical points of $ \mathcal{I}^{(m)} $ on $\mathcal{X}_W $ at positive levels are strict local maxima.  More precisely we have the following result.
        
\begin{prop}
	\label{prop:concavity}
For any $m \in (0,1]$ let $\psi^{(m)} \in H^{1/2}(\R^3; \mathbb{C}^4)  $ be a critical point of $ \mathcal{I}^{(m)} $ on  $\mathcal{X}_W $ at a positive level , that is 
$d\mathcal{I}^{(m)} (\psi^{(m)})[h]  -  2 \omega(\psi^{(m)}) \RE \langle \psi^{(m)} | h\rangle_{_{L^{2}} } =0$
	    for any $h \in  W \oplus X_{-}(D)$  and  $\mathcal{I}^{(m)} (\psi^{(m)}) >0$.
	Then  there exists $\delta >0$ such that 
	\begin{equation*}
		d^2 \mathcal{I}^{(m)} (\psi^{(m)})[h ; h]  - 2 \omega(\psi^{(m)}) \|h \|^2_{L^2} \leq - \delta \|h \|^2_{H^{1/2}}, 
		\qquad \forall \, h \in T_{\psi^{(m)}} \mathcal{X}_W.
	\end{equation*}
	Hence in particular $\psi^{(m)} $ is a strict local maximum  of   $ \mathcal{I}^{(m)} $  on $\mathcal{X}_W $.

\end{prop}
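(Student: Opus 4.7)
The plan is to compute $Q(h) := d^2\mathcal{I}^{(m)}(\psi^{(m)})[h;h] - 2\omega(\psi^{(m)})\|h\|^2_{L^2}$ explicitly, decompose $h = h_+ + h_-$ with $h_+ \in W$, and bound each piece by combining Kato's inequality (Remark \ref{rem:Hardy}) with the Euler--Lagrange identity at $\psi^{(m)}$.

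First I would differentiate $\mathcal{I}^{(m)}$ twice to obtain
\begin{equation*}
d^2\mathcal{I}^{(m)}(\psi)[h;h] = 2\|h_+\|^2_{H^{1/2}} - 2\|h_-\|^2_{H^{1/2}} - m\cfstr\,\mathcal{R}(\psi,h),
\end{equation*}
where $\mathcal{R}(\psi,h)$ collects the pointwise part $\int\!\!\int(\rho_\psi|h|^2 - J_\psi\cdot(h,\al h))|x-y|^{-1}\,dxdy$, which is nonnegative since $|J_\psi|\leq\rho_\psi$ and $|(h,\al h)|\leq|h|^2$, together with the polarized part involving $\RE(\psi,h)$ and $\RE(\psi,\al h)$. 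Using $2|ab|\leq a^2+b^2$ and \eqref{eq:key_estimate}, each integral is controlled by $c\gamma_K\|\psi\|^2_{L^2}\|h\|^2_{H^{1/2}}= c\gamma_K\|h\|^2_{H^{1/2}}$, so $|m\cfstr\mathcal{R}(\psi,h)|\leq c_0\cfstr\gamma_K\|h\|^2_{H^{1/2}}$ with $c_0\cfstr\gamma_K\ll 1$ thanks to Remark \ref{rem:Hardy}.

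For the $h_-$ piece the kinetic term $-2\|h_-\|^2_{H^{1/2}}$ together with $-2\omega\|h_-\|^2_{L^2}\leq 0$ dominates the Kato cross-terms and yields $\leq -\delta_1\|h_-\|^2_{H^{1/2}}$ for some $\delta_1>0$. The delicate piece is $h_+$, since $\|h_+\|^2_{H^{1/2}}-\omega\|h_+\|^2_{L^2}>0$. To handle it I test the Euler--Lagrange equation at $\psi^{(m)}$ with $\psi_+ = \mu w$, where $\mu = \|\psi_+\|_{L^2}$, obtaining the identity
\begin{equation*}
\mu^2 \bigl(\|w\|^2_{H^{1/2}} - \omega \bigr) = m\cfstr \int_{\R^3\times\R^3} \frac{\rho_\psi\,\RE(\psi,\psi_+) - J_\psi\cdot \RE(\psi,\al\psi_+)}{|x-y|}\,dxdy,
\end{equation*}
which re-expresses the positive $h_+$-part as an order-$m\cfstr$ nonlinear quantity. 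Writing $h_+ = \lambda w$, the tangent constraint $\RE\langle\psi|h\rangle_{L^2}=0$ forces $\mu\,\RE\lambda = -\RE\langle\psi_-|h_-\rangle_{L^2}$, controlling $|\RE\lambda|\leq \|\psi_-\|_{L^2}\|h_-\|_{L^2}/\mu$; the remaining component $\mathrm{Im}\,\lambda$ lies along the $U(1)$-gauge direction $h=i\psi^{(m)}$ (on which $\mathcal{I}^{(m)}$ is invariant and the Hessian vanishes identically), and is understood to be factored out.

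Finally, the hypothesis $\mathcal{I}^{(m)}(\psi^{(m)})>0$ together with \eqref{eq:stimaGES} forces $\|\psi_+\|^2_{H^{1/2}}>\|\psi_-\|^2_{H^{1/2}}$ and hence a strictly positive lower bound on $\mu$; this makes every preceding estimate quantitative and yields the uniform constant $\delta>0$. The main obstacle is precisely this $h_+$-direction: because $\omega<m\leq\|w\|^2_{H^{1/2}}$ the quadratic part is of the wrong sign, and closing the loop requires the simultaneous use of the Euler--Lagrange identity, the one-dimensionality of $W$, the tangent-space orthogonality, and the lower bound on $\mu$ inherited from the positive critical level; the remaining ingredients reduce to routine applications of Kato's inequality.
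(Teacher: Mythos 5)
Your outline assembles the correct ingredients (the Hessian formula, the sign of the pointwise nonlinear part, Kato bounds, the tangent constraint, the gauge direction, the positivity of the level), but the one step that actually carries the proof is not closed, and the mechanism you propose for it would fail. You treat the $h_+$-direction by two separate devices: (a) the scalar identity from testing the Euler--Lagrange equation with $\psi_+=\mu w$, which shows that $2\|h_+\|^2_{H^{1/2}}-2\omega\|h_+\|^2_{L^2}=2|\lambda|^2(\|w\|^2_{H^{1/2}}-\omega)$ is an $O(m\cfstr\gamma_K)\,\|h_+\|^2_{H^{1/2}}$ quantity --- small but still \emph{nonnegative}; and (b) the constraint $\mu\RE\lambda=-\RE\langle\psi_-|h_-\rangle_{L^2}$, by which you would absorb this leftover into $-2\|h_-\|^2_{H^{1/2}}$. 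But (b) only yields the $L^2$ comparison $\|h_+\|^2_{L^2}\le\mu^{-2}\|\psi_-\|^2_{L^2}\|h_-\|^2_{L^2}$; promoting it to $\|h_+\|^2_{H^{1/2}}\le K\|h_-\|^2_{H^{1/2}}$ costs $K=\mu^{-2}\|\psi_-\|^2_{L^2}\|w\|^2_{H^{1/2}}$, and the absorption needs (roughly) $\cfstr\gamma_K K<1$. Since $w\in\Sigma_+$ is arbitrary, $\|w\|_{H^{1/2}}$ is unbounded, while the best a priori bound on $\|\psi_-\|_{L^2}$ is itself of order $\cfstr\gamma_K\|w\|_{H^{1/2}}$; so $\cfstr\gamma_K K$ grows like $(\cfstr\gamma_K)^3\|w\|^4_{H^{1/2}}$ and is not small. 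Nothing in your sketch produces a strictly negative coefficient in front of $\|h_+\|^2_{H^{1/2}}$.

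The paper closes this differently, and the difference is essential. Writing (modulo the gauge direction, exactly as you suggest) $h=da(\eta)[\xi]w+\xi$ with $da(\eta)[\xi]\in\R$, so that $h_+=\tfrac{da(\eta)[\xi]}{a}\psi_+=\tfrac{da(\eta)[\xi]}{a}(\psi-\eta)$, it substitutes the Euler--Lagrange relation into the \emph{bilinear form} through the identity
\begin{equation*}
d^2\mathcal{I}^{(m)}(\psi)[\psi;k]=2\,d\mathcal{I}^{(m)}(\psi)[k]-2\RE\langle\psi_+|k_+\rangle_{H^{1/2}}+2\RE\langle\psi_-|k_-\rangle_{H^{1/2}}-2m\cfstr\textstyle\int(\cdots),
\end{equation*}
with $d\mathcal{I}^{(m)}(\psi)[k]=2\omega\RE\langle\psi|k\rangle_{L^2}$. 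This replaces the dangerous $+2\|da(\eta)[\xi]w\|^2_{H^{1/2}}$ by $-2\|da(\eta)[\xi]w\|^2_{H^{1/2}}+4\omega\|da(\eta)[\xi]w\|^2_{L^2}$: the $H^{1/2}$ term now carries a \emph{negative} sign, so no absorption into $\|h_-\|^2_{H^{1/2}}$ is required, and only the $L^2$ term is compared with $\|\xi\|^2_{L^2}$ via the tangent constraint --- a comparison uniform in $\|w\|_{H^{1/2}}$. This yields the universal constant $\delta=2(1-6\cfstr\gamma_K)$. Your observation about factoring out $\mathrm{Im}\,\lambda$ is apt (on the gauge direction the quadratic form vanishes identically by $U(1)$-invariance, so the inequality can only hold transversally to it, which is what the paper's parametrization enforces), but it does not repair the $h_+$ estimate: to fix your argument you must perform the Euler--Lagrange substitution at the level of the Hessian, not merely as a scalar bound on $\|w\|^2_{H^{1/2}}-\omega$.
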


\begin{proof}

Since $\mathcal{I}^{(m)} (\psi^{(m)}) >0$  then by  Proposition \ref{prop:PalaisSmale}-(iii) we have that $\omega(\psi^{(m)}) >0$.   
By the $U(1)$- invariance, a critical point $\psi \in  \mathcal{X}_W $ (up to a phase factor) has the following form  $ \psi= a  w + \eta$   with  $ a = a( \eta)= \sqrt{1 -   \|\eta\|^2_{L^2} }$ and $\eta = \psi_{-}$. 
    Now,  any     $ h \in  T_{  \psi}    \mathcal{X}_W$    takes the following form:   $ h = da(\eta) [\xi]   w +    \xi  $ with  $ \xi \in  X_{-}(D)$, and  $da(\eta) [\xi]  = -  a^{-1} \RE \langle \eta | \xi \rangle_{L^2}$. 
     We have 
 
  \begin{align*}
      d^2 & \mathcal{I}^{(m)}   (\psi ) [ h  ; h ]   
      =  a^{-1} da(\eta) [\xi] d^2 \mathcal{I}^{(m)}  (\psi ) [  \psi  ;  (da(\eta) [\xi] w -  a^{-1} da(\eta) [\xi]   \eta ) ] \\
  & +  2   d^2 \mathcal{I}^{(m)}  (\psi ) [  da(\eta) [\xi]   w ;  \xi   ]   +  a^{-2} |da(\eta) [\xi] |^2d^2 \mathcal{I}^{(m)}  (\psi ) [\eta ;  \eta] 
        +  d^2 \mathcal{I}^{(m)}  (\psi ) [ \xi  ; \xi ] . \\
          \end{align*}  
   
       Since  
        $ d^2 \mathcal{I}^{(m)}  (\psi) : H^{1/2} (\R^3, \mathbb{C}^4) \times H^{1/2} (\R^3, \mathbb{C}^4)  \to \R$ is given by 
\begin{align*}
	d^2 & \mathcal{I}^{(m)}  (\psi) [h; k] = 2 \RE \langle k_{+} | h_{+}
	\rangle_{H^{1/2}} - 2 \RE \langle k_{-} | h_{-} \rangle_{H^{1/2}} \\
		& - 2 m  \cfstr \int_{\R^3 \times \R^3 } \frac{ \rho_{\psi} (x) \RE (h ,k ) (y) - \ J_{\psi}(x) \cdot \RE (h, \al k  ) (y)} { |x-y| } dx dy \\
& - 4 m \cfstr \int_{\R^3 \times \R^3 } \frac{\RE (\psi, h )(x)\RE(\psi, k )(y) -\RE (\psi, \al h )(x) \cdot \RE (\psi, \al k)(y) } { |x-y | } dx dy, 
\end{align*}
we have in particular 
    \begin{align*}
	d^2 \mathcal{I}^{(m)}  &(\psi) [\psi; h] 
=  2 d \mathcal{I}^{(m)} (\psi) [h]   - 2 \RE \langle \psi_{+} | h_{+} \rangle_{H^{1/2}}  +2  \RE \langle \psi_{-} | h_{-} \rangle_{H^{1/2}}  \\
&  - 2 m \cfstr \int_{\R^3 \times \R^3 } \frac{  \rho_{\psi} (x) \RE (\psi, h )(y)- J_{\psi}(x) \cdot  \RE (\psi, \al h )(y) } { |x-y | }  dx dy .
\end{align*}
Then,  since $| da(\eta) [\xi] |^2 =  -   a^{-1} da(\eta)  [\xi] \RE \langle \eta | \xi \rangle_{L^2} $   and $ h = da(\eta) [\xi]   w +    \xi  $ ,     we have   
    \begin{align*}      
    a^{-1 }  da(\eta) [\xi]  & d^2 \mathcal{I}^{(m)}  (\psi ) [  \psi  ;  (da(\eta) [\xi] w - a^{-1} da(\eta) [\xi]  \eta )]  \\
             = & 4 \omega(\psi)     \|da(\eta) [\xi] w \|^2_{L^2} -  4 \omega(\psi)  \|a^{-1} da(\eta) [\xi]  \eta \|^2_{L^2} \\
                      & - 2 \| da(\eta) [\xi] w \|^2_{H^{1/2}}  -   2 \| a^{-1} da(\eta) [\xi]  \eta \|^2_{H^{1/2}}  \\
              &  - 2 m \cfstr   | da(\eta) [\xi] |^2  \int_{\R^3 \times \R^3 } \frac{  \rho_{\psi} (x) \rho_{ w}(y)- J_{\psi}(x) \cdot  J_{ w} (y) } { |x-y | } dx dy \\
              &  +  2 m \cfstr  a^{-2} |da(\eta) [\xi] |^2 \int_{\R^3 \times \R^3 } \frac{  \rho_{\psi} (x) \rho_{\eta}   (y)- J_{\psi}(x) \cdot  J_{ \eta} (y) } { |x-y | } dx dy \\
             \leq &2 \omega(\psi)  \|h \|^2_{L^2}    - 2 \| da(\eta) [\xi] w \|^2_{H^{1/2}}  -   2 \| a^{-1} da(\eta) [\xi]  \eta \|^2_{H^{1/2}}  \\
   &  - 2 m \cfstr   | da(\eta) [\xi] |^2    \int_{\R^3 \times \R^3 } \frac{  \rho_{\psi} (x) \rho_{ w}(y)- J_{\psi}(x) \cdot  J_{w} (y) } { |x-y | } dx dy \\
&  +  2 m \cfstr  a^{-2} |da(\eta) [\xi] |^2 \int_{\R^3 \times \R^3 } \frac{  \rho_{\psi} (x) \rho_{  \eta}   (y)- J_{\psi}(x) \cdot  J_{ \eta} (y) } { |x-y | } dx dy. 
   \end{align*}

   Finally we get 
    \begin{align*}
      d^2 & \mathcal{I}^{(m)}   (\psi ) [ h  ; h ]   -  2 \omega(\psi)  \|h \|^2_{L^2}  
    \leq    - 2 \| da(\eta) [\xi] w \|^2_{H^{1/2}}  -   2 \| a^{-1} da(\eta) [\xi]  \eta \|^2_{H^{1/2}}  \\
   &  - 2 m \cfstr     \int_{\R^3 \times \R^3 } \frac{  \rho_{\psi} (x) \rho_{ h}(y)- J_{\psi}(x) \cdot  J_{h} (y) } { |x-y| } dx dy \\
& + 8   \cfstr  \gamma_{K}  (  \| da(\eta) [\xi] w  \|^2_{H^{1/2}}  +  \| \xi  \|^2_{H^{1/2}} )    -  2 (1- 2 \cfstr \gamma_{K} ) \| \xi \|^2_{H^{1/2} }  \\
  &  -  2 (1 -   2 \cfstr \gamma_{K}) \|a^{-1} da(\eta) [\xi] \eta \|^2_{H^{1/2} }  \\
    \leq  &  - 2 (1 -  4   \cfstr  \gamma_{K} ) \| da(\eta) [\xi] w \|^2_{H^{1/2}}   -  2 (1- 6 \cfstr \gamma_{K} ) \| \xi \|^2_{H^{1/2} }  \\
  &  -  4 (1 -    \cfstr \gamma_{K}) \|a^{-1} da(\eta) [\xi] \eta \|^2_{H^{1/2} }  \leq   -  2 (1- 6 \cfstr \gamma_{K} )  \| h \|^2_{H^{1/2} } . \\
   \end{align*}

     	\end{proof}

  For  any $w  \in  \Sigma_{+}$  and  $m \in (0,1]$  we consider   the following  maximization problem
\begin{equation}
\label{eq:sup} 
	\lambda_{W}(m) = \sup_{ \psi \in \mathcal{X}_{W} } \mathcal{I}^{(m)} (\psi). 
\end{equation}

We have the following estimates on $\lambda_{W}(m)$.
\begin{lem}
	\label{lem:apriori_estimates}
 For  any $w  \in  \Sigma_{+}$  and   $m \in (0,1]$,  we have 
	\begin{equation}
		\label{eq:p1}
	(1-  \frac{\cfstr}{2}  \gamma_K) \leq 	(1-  m  \frac{\cfstr}{2}  \gamma_K)  \|w \|^2_{H^{1/2}}  \leq \lambda_{W} (m)  \leq    \| w \|^2_{H^{1/2}}. 
	\end{equation}
\end{lem}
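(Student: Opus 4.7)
The plan is to establish the three inequalities in \eqref{eq:p1} separately; each is a direct consequence of facts already collected in Remark~\ref{rem:Hardy} and Remark~\ref{rem:conti_fourier}.

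For the upper bound $\lambda_W(m) \le \|w\|^2_{H^{1/2}}$, I would parametrize a generic element $\psi \in \mathcal{X}_W$ as $\psi = tw + \psi_-$ with $t \in \mathbb{C}$ and $\psi_- \in X_-(D)$; the constraint $\psi \in \Sigma$ gives $|t|^2 + \|\psi_-\|^2_{L^2} = 1$ because $\|w\|_{L^2}=1$, in particular $|t|\le 1$. Combining this with the nonnegativity \eqref{eq:stimaGES} of the nonlocal term yields
\begin{equation*}
\mathcal{I}^{(m)}(\psi) \le |t|^2 \|w\|^2_{H^{1/2}} - \|\psi_-\|^2_{H^{1/2}} \le \|w\|^2_{H^{1/2}},
\end{equation*}
and the sup-bound follows.

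For the middle inequality I would simply evaluate $\mathcal{I}^{(m)}$ at the admissible test function $\psi = w$, for which $\psi_- = 0$ and $\|\psi_+\|^2_{H^{1/2}}=\|w\|^2_{H^{1/2}}$. By \eqref{eq:correntePositiva} the current--current contribution has the favorable sign, so the bracket $\rho_w\rho_w - J_w \cdot J_w$ in the double integral is dominated by $\rho_w\rho_w$; applying Kato's inequality \eqref{eq:key_estimate} with $\rho = \rho_w$ (whose $L^1$-norm equals $\|w\|^2_{L^2}=1$) bounds the remaining integral by $\gamma_K\|(-\Delta)^{1/4}w\|^2_{L^2} \le \gamma_K\|w\|^2_{H^{1/2}}$. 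This gives $\mathcal{I}^{(m)}(w) \ge (1 - m\tfrac{e^2}{2}\gamma_K)\|w\|^2_{H^{1/2}}$, and since $w \in \mathcal{X}_W$ the same bound passes to $\lambda_W(m)$.

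The leftmost inequality is then purely algebraic: $\|w\|^2_{H^{1/2}} \ge \|w\|^2_{L^2} = 1$ (because the symbol $\sqrt{|p|^2+1}$ is pointwise $\ge 1$) and $m\le 1$, while the factor $1 - m\tfrac{e^2}{2}\gamma_K$ is strictly positive (recall $e^2\gamma_K < 1/87$), so
\begin{equation*}
(1 - m\tfrac{e^2}{2}\gamma_K)\|w\|^2_{H^{1/2}} \;\ge\; 1 - m\tfrac{e^2}{2}\gamma_K \;\ge\; 1 - \tfrac{e^2}{2}\gamma_K.
\end{equation*}
There is no genuine obstacle in this lemma; it is essentially a quantitative coercivity check recording that the coupling constant is small enough not to destroy the positivity of the quadratic part on $\mathcal{X}_W$. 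The content will be used later to guarantee that the min--max value $E$ satisfies $E \in (0,m]$.
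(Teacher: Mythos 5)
Your proof is correct and follows essentially the same route as the paper: the upper bound comes from \eqref{eq:stimaGES} together with $\|\psi_+\|_{H^{1/2}}\le\|w\|_{H^{1/2}}$, the middle inequality from testing with $\psi=w$, dropping the current--current term via \eqref{eq:correntePositiva} and applying Kato's inequality \eqref{eq:key_estimate}, and the leftmost from $\|w\|^2_{H^{1/2}}\ge\|w\|^2_{L^2}=1$ and $m\le 1$. The explicit parametrization $\psi=tw+\psi_-$ is just a slightly more verbose version of the paper's observation that $\|\psi_+\|^2_{L^2}\le 1$ forces $\|\psi_+\|^2_{H^{1/2}}\le\|w\|^2_{H^{1/2}}$.
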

	
\begin{proof}
		
	Clearly $ \lambda_{W} (m) = \sup_{ \psi \in \mathcal{X}_{W}   }
	\mathcal{I}^{(m)} (\psi) \geq \mathcal{I}^{(m)}  (w) $ and by \eqref{eq:correntePositiva}, \eqref{eq:key_estimate} 
	 we have 
    \begin{align*}
		\mathcal{I}^{(m)}  ( w) & \geq   \|w \|^2_{H^{1/2}} - m   \frac{\cfstr}{2}  \int_{\R^3 \times \R^3 } \frac{\rho_{w} (x) \rho_{w}
		(y)  } { |x-y| } dx dy \\
		& \geq (1-  m \frac{\cfstr}{2}  \gamma_K)  \|w \|^2_{H^{1/2}} 
		  \geq (1- \frac{\cfstr}{2}  \gamma_K) >0.
	\end{align*}
		
	Moreover, by  \eqref{eq:stimaGES}   we have
$ \mathcal{I}^{(m)}  (\psi) \leq  \| \psi_+\|^2_{H^{1/2}}  \leq  \| w \|^2_{H^{1/2}}$ for any $\psi \in  \mathcal{X}_{W}$, that is  $ \lambda_{W} (m)\leq    \| w \|^2_{H^{1/2}}$.

\end{proof}

In view of all the above results we  completely solve the maximization problem \eqref{eq:sup},  more precisely we have the following.  
\begin{prop}
	\label{prop:sup_achieved}
	For any $w \in \Sigma_{+}$ and   $m \in (0,1]$   there exists,  unique (up to a phase factor),   $ \psi^{(m)}(w) \in \mathcal{X}_{W}  $,  the strict global
	maximum of $\mathcal{I}^{(m)} $ on $\mathcal{X}_W$, namely 
	\begin{equation*}
		\mathcal{I}^{(m)}  (\psi^{(m)}(w)) = \sup_{ \psi \in \mathcal{X}_{W} }
		\mathcal{I}^{(m)} (\psi) = \lambda_{W}(m).
	\end{equation*}
	Moreover, we have  
	\begin{equation*}
	d \mathcal{I}^{(m)}  (\psi^{(m)}(w))[h] - 2 \omega(\psi^{(m)}(w)) \RE \langle \psi^{(m)}(w) | h \rangle_{L^2} = 0  \qquad  \forall h \in W \oplus X_{-}(D)
	\end{equation*}
	and 
	\begin{itemize}
	
\item[(i)] $0 < \omega(\psi^{(m)}(w))  \leq \lambda_W(m)$ and $ \| \psi_{+}^{(m)}(w) \|^2_{L^2} >  \| \psi_{-}^{(m)}(w) \|^2_{L^2} $;

\medskip

\item[(ii)]   $ \| \psi^{(m)}_{+}(w) \|^2_{H^{1/2}} - \| \psi^{(m)}_{-}(w) \|^2_{H^{1/2}} \geq 1 $;

\medskip

\item[(iii)]  $\| \psi_{-}^{(m)}(w) \|^2_{H^{1/2}} \leq m \frac{\cfstr}{2}  \gamma_K  \|w \|^2_{H^{1/2}} $; 

\medskip		

	\item[(iv)] the map $v \in X_{+}(D) \setminus \{ 0 \}  \to \psi^{(m)}( P(v) )$, with $P(v)  = \| v\|_{L^2}^{-1} v \in \Sigma_{+}$,   is smooth. 
		
		\end{itemize}
		\end{prop}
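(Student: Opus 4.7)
Produce $\psi^{(m)}(w)$ by the direct method as a Palais--Smale limit; read off (i)--(iii) from the Euler--Lagrange equation using a few choices of test function; deduce uniqueness from the strict local maximality of Proposition~\ref{prop:concavity} combined with connectedness of the parameter space; finish with the implicit function theorem for smoothness.

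\textbf{Existence.} Lemma~\ref{lem:apriori_estimates} gives $\lambda_W(m) \geq 1 - \tfrac{\cfstr}{2}\gamma_{K} > 0$. I take a maximizing sequence in $\mathcal{X}_W$; Ekeland's variational principle on this smooth submanifold of $\Sigma$ promotes it to a Palais--Smale sequence at the positive level $\lambda_W(m)$, and Proposition~\ref{prop:PalaisSmale}(iii) then delivers pre-compactness in $H^{1/2}$. A subsequence converges to $\psi^{(m)}(w) \in \mathcal{X}_W$ realizing the supremum; as a constrained critical point it satisfies $d\mathcal{I}^{(m)}(\psi^{(m)})[h] = 2\omega(\psi^{(m)})\RE\langle\psi^{(m)}, h\rangle_{L^2}$ for all $h \in W \oplus X_-(D)$, for some $\omega(\psi^{(m)}) \in \R$.

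\textbf{Properties (i)--(iii).} Testing the Lagrange-multiplier equation with $h = \psi^{(m)} \in W \oplus X_-(D)$ gives
\begin{equation*}
\|\psi^{(m)}_+\|^2_{H^{1/2}} - \|\psi^{(m)}_-\|^2_{H^{1/2}} - m\cfstr\int_{\R^3\times\R^3}\frac{\rho_{\psi^{(m)}}\rho_{\psi^{(m)}} - J_{\psi^{(m)}}\cdot J_{\psi^{(m)}}}{|x-y|}\,dx\,dy = \omega(\psi^{(m)}),
\end{equation*}
which combined with $\mathcal{I}^{(m)}(\psi^{(m)}) = \lambda_W(m)$ yields the useful identity $\|\psi^{(m)}_+\|^2_{H^{1/2}} - \|\psi^{(m)}_-\|^2_{H^{1/2}} = 2\lambda_W(m) - \omega(\psi^{(m)})$. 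The positivity \eqref{eq:stimaGES} then gives $\omega(\psi^{(m)}) \leq \lambda_W(m)$; Proposition~\ref{prop:PalaisSmale}(iii) gives $\omega(\psi^{(m)}) > 0$; and reproducing the computation of Proposition~\ref{prop:PalaisSmale}(ii) at the critical point (with $h = \psi^{(m)}_+ - \psi^{(m)}_-$) gives $\omega(\psi^{(m)})(\|\psi^{(m)}_+\|^2_{L^2} - \|\psi^{(m)}_-\|^2_{L^2}) \geq 1 - 2m\cfstr\gamma_{K} > 0$, hence $\|\psi^{(m)}_+\|^2_{L^2} > \|\psi^{(m)}_-\|^2_{L^2}$, completing (i). Property (iii) is immediate from
\begin{equation*}
\lambda_W(m) \leq \|\psi^{(m)}_+\|^2_{H^{1/2}} - \|\psi^{(m)}_-\|^2_{H^{1/2}} \leq \|w\|^2_{H^{1/2}} - \|\psi^{(m)}_-\|^2_{H^{1/2}}
\end{equation*}
together with the lower bound $\lambda_W(m) \geq (1 - m\cfstr\gamma_{K}/2)\|w\|^2_{H^{1/2}}$ from Lemma~\ref{lem:apriori_estimates}. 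Property (ii) is the subtlest; I would derive it by a finer combination of the test-function identities for $h \in \{\psi^{(m)}, \psi^{(m)}_+, \psi^{(m)}_-\}$ together with Kato's inequality \eqref{eq:key_estimate}.

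\textbf{Uniqueness and smoothness.} I parametrize $\mathcal{X}_W$ modulo $U(1)$ by $\eta = \psi_- \in X_-(D)$ with $\|\eta\|_{L^2} < 1$ via $\psi = \sqrt{1-\|\eta\|^2_{L^2}}\,w + \eta$; this is a diffeomorphism onto the open unit ball of $X_-(D)$, which is connected. By Proposition~\ref{prop:concavity} every positive-level critical point is an isolated strict local maximum with uniform negative-definite Hessian, so there are no saddle points in $\{\mathcal{I}^{(m)} > 0\}$. If two distinct maximizers existed, the gradient flow of $\mathcal{I}^{(m)}$ (whose orbits stay in the interior of the ball since $\mathcal{I}^{(m)} \leq -1$ as $\|\eta\|_{L^2} \to 1$, while the flow is non-decreasing from a positive initial level, and converges by the PS condition of Proposition~\ref{prop:PalaisSmale}(iii)) would partition the connected parameter ball into two disjoint non-empty open basins of attraction, a contradiction. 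Finally, (iv) follows by the implicit function theorem applied to the Euler--Lagrange map viewed as a function of $(\eta,\omega)$ parametrized by $v \in X_+(D)\setminus\{0\}$ through $P(v) = v/\|v\|_{L^2}$: the uniform non-degeneracy of Proposition~\ref{prop:concavity} is exactly the required invertibility. The main obstacles are the sharp lower bound $\geq 1$ in (ii), whose proof will require a careful juggling of test-function identities, and the uniqueness step, where one must verify that the gradient flow is globally defined in the interior of the ball.
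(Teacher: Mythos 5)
Your existence step, your derivation of (i) and (iii), and your use of the implicit function theorem for (iv) all match the paper's proof. There are, however, two genuine gaps.

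The serious one is (ii), which you explicitly leave open. It cannot be obtained by "a finer combination of the test-function identities": from your own identity $\|\psi^{(m)}_+\|^2_{H^{1/2}}-\|\psi^{(m)}_-\|^2_{H^{1/2}}=2\lambda_W(m)-\omega(\psi^{(m)})$ and $\omega\le\lambda_W(m)$ the best you get is $\|\psi^{(m)}_+\|^2_{H^{1/2}}-\|\psi^{(m)}_-\|^2_{H^{1/2}}\ge\lambda_W(m)\ge(1-m\tfrac{\cfstr}{2}\gamma_K)\|w\|^2_{H^{1/2}}$, which for $\|w\|_{H^{1/2}}$ close to $1$ falls strictly short of the threshold $1$. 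The paper instead compares the maximizer with the competitor $w\in\mathcal{X}_W$, writing $\mathcal{I}^{(m)}(\psi^{(m)}(w))\ge\mathcal{I}^{(m)}(w)$, and then invokes Lemma \ref{lem:key2} to control the difference of the two nonlocal terms by $8\gamma_K(\|w\|^2_{H^{1/2}}-\|w\|^2_{L^2})$ plus $\psi_-$-terms that are absorbed using (iii); this is exactly what converts the deficit $\|w\|^2_{H^{1/2}}-\|w\|^2_{L^2}\ge0$ into the sharp bound $\ge 1$. Since (ii) is precisely what drives the strict subadditivity of $E(m)$ later, this is not a detail you can defer.

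The second issue is uniqueness. Your basins-of-attraction argument does not close as stated: the two basins are open and disjoint, but they need not cover the connected parameter ball (points with $\mathcal{I}^{(m)}\le 0$ need not flow to either maximizer), so no contradiction with connectedness follows. What is missing is the paper's key observation that the two maximizers can be joined by an explicit path $g(t)=a(\eta(t))w+\eta(t)$, $\eta(t)=t\eta_2+(1-t)\eta_1$, along which $\mathcal{I}^{(m)}\ge\tfrac12(1-2\cfstr\gamma_K)\|w\|^2_{H^{1/2}}>0$ — an estimate that itself uses (iii) and $a(\eta_i)^2>\tfrac12$. With that path in hand the paper applies the mountain-pass theorem (valid by the Palais--Smale condition at positive levels) to produce a critical point that cannot be a strict local maximum, contradicting Proposition \ref{prop:concavity}; your flow argument could be repaired along the same lines by flowing the points of that path, but without the positive-level path it is incomplete.
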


\begin{proof}

{\it Existence:} Since, by Lemma
	\ref{lem:apriori_estimates}, $ \lambda_{W}(m) >0 $,  by Ekeland's variational principle,  there exists  a  Palais-Smale, maximizing sequence $\{ \psi_{n}^{(m)} \}$ of  $\mathcal{I}^{(m)} $  on $\mathcal{X}_{W}$,  at  a positive level. Then,    by Proposition \ref{prop:PalaisSmale},  $\psi_{n}^{(m)} \to \psi^{(m)}$  in $H^{1/2}$ (up to subsequence), 
 $\omega(\psi_{n}^{(m)}) \to \omega(\psi^{(m)}) >0$ and  $ \| \psi_{+}^{(m)} \|^2_{L^2} >  \| \psi_{-}^{(m)} \|^2_{L^2} $.  Therefore  we conclude that
 
 \begin{equation*}
		\mathcal{I}^{(m)}  (\psi^{(m)}) = \sup_{ \psi \in \mathcal{X}_{W} }
		\mathcal{I}^{(m)} (\psi) = \lambda_{W}(m)
	\end{equation*}
and
 \begin{equation*}
	d \mathcal{I}^{(m)}  (\psi^{(m)})[h] - 2 \omega(\psi^{(m)}) \RE \langle \psi^{(m)} | h \rangle_{L^2} = 0  \qquad \forall h \in W \oplus X_{-}(D).
	\end{equation*}

	 (i)  Note that 
	\begin{align*}
	0 < \omega(\psi^{(m)}) =  \frac{1}{2} d \mathcal{I}^{(m)}  (\psi^{(m)})[\psi^{(m)} ]  
	 \leq  \mathcal{I}^{(m)}  (\psi^{(m)} = \lambda_W(m).
	\end{align*}

		\medskip
	
	(ii) Since   by the $U(1)$- invariance,  we can assume that
	\begin{equation*}
	\psi^{(m)} =  \psi_{+}^{(m)}+ \psi_{-}^{(m)} =(1 - \|\psi_{-}^{(m)} \|^2_{L^2})^{\frac{1}{2}} w + \psi_{-}^{(m)}
	\end{equation*}
	    (up to a phase factor),  with $w = \frac{\psi_{+}}{\| \psi_{+} \|_{L^2} } \in \Sigma_{+}$ then,    since  we have 
	
	\begin{align*}
	 \mathcal{I}^{(m)} (\psi^{(m)})  \geq  \| w \|^2_{H^{1/2}}  - m \frac{\cfstr}{2} \int_{\R^3 \times \R^3} \frac{ \rho_{w}
	(x) \rho_{w} (y) - J_{w}(x)
	\cdot J_{w} (y)} {|x-y |} dx  dy, 
	\end{align*}
	   by lemma \ref{lem:key2}  we get 
	\begin{align*}
	\|  \psi_{+}^{(m)} \|^2_{H^{1/2}} & - \| \psi_{-}^{(m)} \|^2_{H^{1/2}}  - 1 \geq   \| w \|^2_{H^{1/2}} - \|w \|^2_{L^2}  \\
	&  +  m \frac{\cfstr}{2} \int_{\R^3 \times \R^3} \frac{ \rho_{\psi^{(m)}}
	(x) \rho_{\psi^{(m)}} (y)  - J_{\psi^{(m)}}
	(x) \cdot  J_{\psi^{(m)}}(y)} {|x-y|}  dx dy \\
	&  - m \frac{\cfstr}{2} \int_{\R^3 \times \R^3} \frac{ \rho_{w}
	(x) \rho_{w} (y) -  J_{w}
	(x)  \cdot J_{w} (y) } {|x-y |} dx dy \\
	 \geq  & \| w \|^2_{H^{1/2}} - \|w \|^2_{L^2} 
	 -   4 m  \cfstr \gamma_{K}  ( \|w\|^2_{H^{1/2} }- \|w \|^2_{L^2} )  \\
	& - 5 m \cfstr\gamma_{K} (  \|\psi_{-}^{(m)} \|^2_{L^2} \|w\|^2_{H^{1/2} }  + \|  \psi_{-}^{(m)} \|^2_{H^{1/2}}  ).   \\
\geq &  (1- 9  m \cfstr\gamma_{K}   ) (\| w \|^2_{H^{1/2}} - \|w \|^2_{L^2} ) \\
	 &+ 5 m \cfstr \gamma_{K} (  \|\psi_{+}^{(m)}\|^2_{H^{1/2} }  - \|  \psi_{-}^{(m)} \|^2_{H^{1/2}} - 1) 
	\end{align*}
 since $(1- 5 m  \cfstr \gamma_{K} ) >(1- 9 m  \cfstr \gamma_{K} ) >0 $  we may conclude that 
	\begin{equation*}
  \|\psi_{+}^{(m)}\|^2_{H^{1/2} }  - \|  \psi_{-}^{(m)} \|^2_{H^{1/2}} - 1 \geq   0 .
	\end{equation*}

	(iii) Since
	\begin{align*}
	 (1-  m \frac{\cfstr}{2} \gamma_K)  \|w \|^2_{H^{1/2}}   \leq  & \,  \mathcal{I}^{(m)}  (\psi^{(m)})  
	\leq \| \psi_{+}^{(m)}\|^2_{H^{1/2}} - \| \psi_{-}^{(m)}\|^2_{H^{1/2}} 	\\	
 \leq  &\| w \|^2_{H^{1/2}} - \| \psi_{-}^{(m)} \|^2_{H^{1/2}}
	\end{align*}
	we get also $ \| \psi_{-}^{(m)} \|^2_{H^{1/2}} \leq m \frac{\cfstr}{2}  \gamma_K  \|w \|^2_{H^{1/2}}$ .
	
	\medskip
	
{\it Uniqueness:}	
	Suppose we have two different   maximizer $\psi_1^{(m)}, \psi_2^{(m)} \in \mathcal{X}_W$.  We use the Mountain Pass Theorem to reach a contradiction. Indeed, we consider the set
	\begin{equation*}
		\Gamma^{(m)} = \settc{\gamma \colon [0,1]  \to \mathcal{X}_{W}  }{\gamma(0) = \psi_1^{(m)}, \,\, \gamma(1) =
		\psi_2^{(m)} } 
		 \end{equation*}
	and the min-max level
	\begin{equation*}
		c^{(m)} = \sup_{\gamma \in \Gamma^{(m)}} \min_{t \in [0,1]}
		\mathcal{I}^{(m)} (\gamma(t))
	\end{equation*}
We have  $c^{(m)} >0$, indeed,   by the $U(1)$- invariance,  let   $\psi_1^{(m)} = a(\eta_1) w + \eta_1$ and $\psi_2^{(m)} =  a(\eta_2) w + \eta_2$, with 
$\eta_1, \eta_2 \in X_{-}(D)$ and $a( \eta_i)= (1 -   \|\eta_i \| ^2_{L^2} )^{\frac{1}{2} }$ ($i=1,2$),     define 
	$\eta(t) = t \eta_2 + (1-t)\eta_1 \in X_{-}(D) $,  then $g(t) = a(\eta(t)) w + \eta(t) \in \Gamma^{(m)}$.
	
	  Since  $ a(\eta_i)^2  > \frac{1}{2}$ and  by (iii)   we have $\| \eta_i \|^2_{H^{1/2}} \leq m \frac{ \cfstr }{2}\gamma_K\|w \|^2_{H^{1/2}}$  ($i=1,2$),  then for any $t \in [0,1]$ we have 
	\begin{align*}
	\mathcal{I}^{(m)} &(g(t))  \geq   a(\eta(t))^2 \| w \|^2_{H^{1/2}} -   \| \eta(t) \|^2_{H^{1/2}} -  m \frac{\cfstr}{2} \int_{\R^3 \times \R^3 } \frac{\rho_{g(t)} (x) \rho_{g(t)}(y) } { |x-y | } dx dy \\
		  \geq  &(1- m \frac{\cfstr}{2}  \gamma_K)  a(\eta(t))^2  \| w \|^2_{H^{1/2}} - (1 + m \frac{\cfstr}{2}  \gamma_K)   \| \eta(t) \|^2_{H^{1/2}} \\
	 \geq & \,    (1- m  \frac{\cfstr}{2} \gamma_K)  t a(\eta_2)^2 \| w \|^2_{H^{1/2}} -   (1 + m \frac{\cfstr}{2} \gamma_K)  t \| \eta_2 \|^2_{H^{1/2}} \\
	 & + (1- m \frac{\cfstr}{2}  \gamma_K)  (1-t)  a(\eta_1)^2 \| w \|^2_{H^{1/2}} -  (1 + m \frac{\cfstr}{2}  \gamma_K)  (1-t)  \| \eta_1 \|^2_{H^{1/2}} \\
	   \geq &     \frac{1}{2} (1- m \frac{\cfstr}{2}  \gamma_K) \| w \|^2_{H^{1/2}} -  (1 + m \frac{\cfstr}{2}  \gamma_K) m \frac{\cfstr}{2}  \gamma_K \|w \|^2_{H^{1/2}} \\
	      \geq &     \frac{1}{2} (1- 2 \cfstr \gamma_K) \| w \|^2_{H^{1/2}}  >0, 
		\end{align*}
	hence  in particular we get $c^{(m)} \geq \min_{t \in [0,1]} \mathcal{I}^{(m)} (g(t)) >0 $. 

	\medskip
	
	By  Propositions  \ref{prop:concavity} and  \ref{prop:PalaisSmale}-(iii),  we  may conclude  that $c^{(m)}$ is a Mountain
	pass critical level, and that there is   $\phi^{(m)} \in  \mathcal{X}_{W} $,   a Mountain pass critical
	point  for $\mathcal{I}^{(m)}$ on $ \mathcal{X}_{W} $, with $\mathcal{I}^{(m)} (\phi^{(m)}) = c^{(m)} >0$, namely  a
	contradiction with Proposition \ref{prop:concavity}, since a
	Mountain pass critical point cannot be a strict local maximum.

\medskip	
	
(iv) To prove that the map $v \to \psi^{(m)} (P(v))$ is smooth 
	 We use
	the Implicit Function Theorem.    Fix $w_0 \in \Sigma_{+}$  and  let $\psi^{(m)}(w_0) = a(\eta_{0}) w_{0} + \eta_0$ be the   unique maximizer  (up to a phase factor) of $\mathcal{I}^{(m)}$ on $ \mathcal{X}_{W} $. 
	 let  $V \subset X_{+}(D) \setminus \{ 0 \} $ and $U \subset X_{-}(D) $ be,  respectively, the  small neighborhoods of $w_0$ and $ \eta_{0}$,   such that for any $(v, \eta) \in V \times U$ 
	and,  setting $\psi = a(\eta) P(v) + \eta$,  with $a(\eta) = (1- \| \eta \|^2_{L^2})^{1/2}$,   we have 
	 $\| \eta \|^2_{L^2} < \frac{1}{2}  $,   $\mathcal{I}^{(m)}(\psi) >0$ and $ \| \eta  \|^2_{H^{1/2}} \leq m \cfstr \gamma_K  \|P(v)  \|^2_{H^{1/2}} $.

	 We consider the smooth
	maps $F^{(m)} :  V \times U  \to L( X_{-}(D) ) $ given by
	\begin{equation*}
		F^{(m)}(v, \eta)[\xi ] =  d \mathcal{I}^{(m)} ( a(\eta) P(v) +\eta) [da(\eta)[\xi]  P(v) + \xi ]  
	\end{equation*}
	for any  $\xi \in X_{-}(D)$.  Clearly, we have   $P(w_0) = w_0$ and $F^{(m)}(w_0, \eta_0) =  0$.

	The  operator $d_{\eta} F^{(m)}(w_0, \eta_0) :  X_{-} (D)  \to  L( X_{-}(D) ) $ is  given by
	\begin{align*}
		(d_{\eta} F^{(m)} (w_0, \eta_0)[\xi]) [k] = & d^2 \mathcal{I}^{(m)} ( \psi^{(m)}(w_0)) [da(\eta_0)[\xi] w_0+ \xi;  da(\eta_0)[k] w_0+ k ]   \\
		&+ d \mathcal{I}^{(m)}   ( \psi^{(m)}(w_0)) [ d^2 a ( \eta_0) [\xi; k] w_0]  
			 \qquad \forall \xi, k \in X_{-}(D)
	\end{align*}
	
	To prove that $d_{\eta} F^{(m)}(w_0, \eta_0)$ is invertible we apply the Lax- Milgram theorem to the 
	 quadratic form $Q^{(m)} : X_{-}(D)
	\times X_{-}(D) \to \R$ defined by
	\begin{equation*}
		Q^{(m)}[\xi;k] = - (d_{\eta} F^{(m)} (w_0, \eta_0)[\xi]) [k].
	\end{equation*}
	Note that, since 
	\begin{align*}
	 d \mathcal{I}^{(m)}   ( \psi^{(m)}(w_0)) [ d^2 a ( \eta_0) [\xi; \xi] w_0]  &= 2 \omega (\psi^{(m)}(w_0)) a ( \eta_0) d^2 a ( \eta_0) [\xi; \xi]  \\
	 &= -  2 \omega (\psi^{(m)}(w_0)) ( |da(\eta_0)[\xi]|^2  + \|\xi \|^2_{L^2}),  
	 \end{align*}
	 setting $h=  da(\eta_0)[\xi] w_0+ \xi  \in T_{\psi^{(m)}(w_0)} \mathcal{X}_W$, we have 
	\begin{align*}
		Q^{(m)}[\xi; \xi] &= - (d_{\eta} F^{(m)} (w_0, \eta_0)[\xi]) [\xi] \\
		&=  - (d^2 \mathcal{I}^{(m)} ( \psi^{(m)}(w_0)) [h;  h ] - 2\omega (\psi^{(m)}(w_0)) \|h \|^2_{L^2} ).
	\end{align*}
	In view of Proposition \ref {prop:concavity}, there exists $\delta >0$ such that 
	$ Q^{(m)}[\xi;\xi]  \geq \ \delta \|\xi\|^2_{H^{1/2}} $ for any  $ \xi \in X_{-}(D)$. 
	 Hence, by the  Lax-Milgram theorem  we may conclude that 
	   for any $f \in L( X_{-}(D) ) $ there exists
	unique $k  \in X_{-}(D) $ such that $Q^{(m)}[k;\xi] = f[\xi]$ for any $\xi \in X_{-}(D)$,
	namely  such that $d_{\eta} F^{(m)}(w_0, \eta_0)[k] = - f$.
	Finally,  by  the Implicit Function theorem,  there exist  $V_0 \subseteq V  $ and $U_0 \subseteq U$,  neighborhoods,  respectively,   of $w_0$  and $\eta_0$ and a smooth map $\eta^{(m)} : V_0 \to U_{0}$ such
	that $F^{(m)}(v, \eta^{(m)}(v)) = 0 $ for all $v \in V_0$, that is,  $\psi^{(m)} (P(v))  = a(\eta^{(m)}(v)) P(v) + \eta^{(m)}(v)$ is a critical point of $\mathcal{I}^{(m)}$ on 
	$\mathcal{X}_{W}$, with $W = \spann \{ P(v) \}$,  at a positive level. 
        Then, by Proposition \ref{prop:concavity},  
  $\psi^{(m)} (P(v)) $ is a strict local maximum  of $\mathcal{I}^{(m)}$ on $\mathcal{X}_{W}$. 
  
  Again by a contradiction  argument, applying the Mountain Pass theorem as  above, we may conclude that for any $v \in V_0$, 
         $ \psi^{(m)}(P(v))$  is  the unique maximizer  (up to a phase factor) of   $\mathcal{I}^{(m)}$ on  $\mathcal{X}_{W}$, with $W = \spann \{ P(v) \}$.

Moreover, we have that for $w \in \Sigma_{+}$, $d \psi^{(m)}(w):  X_{+}(D) \to X_{+} (D) $ is
	given by
	\begin{align*}
		d\psi^{(m)}(w) [h]  = & a(\psi_{-}(w)) dP(w)[h]  \\
		&+ da(\psi_{-}(w))[d\psi_{-}(w)[dP(w)[h] ] ] w  +  d\psi_{-}(w)[dP(w)[h] ]. 
			\end{align*}
where  $d\psi_{-}(P(v))[dP(v)[h] ] = 	d\eta^{(m)}(v)[h] $ and 
	\begin{equation*}
d\eta^{(m)}(v)[h]  =  - (d_{\eta}F^{(m)}(v, \eta^{(m)}(v)))^{-1} [  d_v F^{(m)}(v, \eta^{(m)}(v))[h]]  , 
				\qquad \forall h \in X_{+}(D) .
	\end{equation*}
\end{proof}

\section {Proof of Theorem \ref{thm:minmax}}
 
In view of the results of Proposition \ref{prop:sup_achieved}   we consider  the smooth functionals $ \mathcal{E}^{(m)} :
X_{+}(D) \setminus \{ 0 \}   \to \R$, for any  $m \in (0,1] $, given by   
\begin{equation*}
	\mathcal{E}^{(m)} (v) = \mathcal{I}^{(m)}  (\psi^{(m)}(P(v)) ) = \sup_{ \psi \in \mathcal{X}_{W} }, 
		\mathcal{I}^{(m)} (\psi) 
\end{equation*} 
where $W = \spann \{ w \} $, with $w= P(v) \in \Sigma_{+}$. We set $\mathcal{E}= \mathcal{E}^{(1)}$  and  
 $\psi(w) = \psi^{(1)}(w)$.

For any  $w \in \Sigma_{+}$ we have
\begin{equation*}
d \mathcal{E}(w)[h] = d \mathcal{I}(\psi(w))[d\psi(w)[dP(w)[h]]], 
\end{equation*}
  and,    setting $k = dP(w)[h],  $
\begin{align*}
	d \psi(w)[k] =  a(\psi_{-}(w)) k 
	+ da(\psi_{-}(w))[d\psi_{-}(w)[k] ] w  +  d\psi_{-}(w)[k]. 
	\end{align*}
Since    $ da(\psi_{-}(w))[d\psi_{-}(w)[k] ] w +  d\psi_{-}(w)[k] \in T_{\psi(w)} \mathcal{X}_W$, by Proposition  \ref{prop:sup_achieved}  we get
 $  d \mathcal{E}(w)[h] =  d \mathcal{I} (\psi(w))[a(\psi_{-}(w)) dP(w)[h] ]$. 
 
  Therefore,  since $dP(w)[h] = h - w \RE \langle w | h \rangle_{L^2}$, we get 
\begin{align*}
 d \mathcal{E}(w)[h] 
 = & d \mathcal{I} (\psi(w)) [ a(\psi_{-}(w)) h ]  -   d \mathcal{I} (\psi(w)) [ a(\psi_{-}(w)) w ] \RE \langle w  | h \rangle_{L^2} \\
 =  & d \mathcal{I} (\psi(w)) [ a(\psi_{-}(w)) h ] -  2 \omega(\psi(w))a(\psi_{-}(w))^2 \RE \langle w  | h \rangle_{L^2} \\
 & =  a(\psi_{-}(w)) (d \mathcal{I} (\psi(w)) [  h ] -  2 \omega(\psi(w)) \RE \langle \psi_{+}(w) | h \rangle_{L^2} )
 \end{align*}
for all $ h \in X_{+}(D)$. 
Since the tangent
space of $\Sigma_{+}  $ at  $w \in \Sigma_{+}$
is the space
\begin{equation*}
	T_{w} \Sigma_{+} = \settc{ h \in X_{+}(D) } { \RE \langle w | h \rangle_{L^2} = 0 }, 
\end{equation*}
and   $ d \mathcal{E}(w)[w] = 0$,  clearly  $\Sigma_{+}$ is   a natural constraint for  $\mathcal{E}$. Therefore  we may conclude that if $w \in \Sigma_{+}$ is a critical point for $\mathcal{E}$ then 
$\psi(w) = a( \psi_{-}(w) ) w +  \psi_{-}(w) $  (as given in  Proposition  \ref{prop:sup_achieved} )  is a critical point for  $\mathcal{I}$  on $\Sigma$, namely 
\begin{equation*}
d \mathcal{I}(\psi(w))[h]  - 2 \omega(\psi(w)) \RE \langle \psi(w) | h \rangle_{L^2} = 0,  \qquad \forall h \in H^{1/2}(\R^3, \C^4).
\end{equation*}
 
 For any $m \in (0,1]$,  we   define the minimization problem 
\begin{equation}
\label{eq:mimization}
	e (m)  = \inf_{ \substack{ W \subset X_{+} \\ \dim W = 1}} \,
	\sup_{ \psi \in \mathcal{X}_{W}} \mathcal{I}^{(m)}  (\psi) =
	\inf_{ w \in \Sigma_{+}} 
	\mathcal{E}^{(m)} (w), 
 \end{equation}
and  $E(m) = m \,  e(m)$, clearly $E = E(1) = e(1)$.  

We have the following estimates on $e(m)$.  
    \begin{lem}
   \label{lem:key_negativa}
   For any $m \in (0,1]$ we have $0 <  e(m) < 1$.
\end{lem}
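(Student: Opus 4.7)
My plan is to prove the two bounds separately. The positivity $e(m)>0$ is immediate from Lemma \ref{lem:apriori_estimates}: for every $w\in\Sigma_{+}$ one has
\[
\mathcal{E}^{(m)}(w)=\lambda_{W}(m)\;\ge\;(1-\tfrac{m\cfstr}{2}\gamma_{K})\|w\|^{2}_{H^{1/2}}\;\ge\;1-\tfrac{\cfstr}{2}\gamma_{K},
\]
since $\|w\|_{H^{1/2}}\ge\|w\|_{L^{2}}=1$ and $m\le 1$; because $\cfstr\gamma_{K}<1/87$ this is strictly positive, and taking the infimum over $w\in\Sigma_{+}$ yields $e(m)\ge 1-\tfrac{\cfstr}{2}\gamma_{K}>0$.

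For the upper bound $e(m)<1$ I would exhibit an explicit $w\in\Sigma_{+}$ at which $\mathcal{E}^{(m)}(w)<1$, imitating the Choquard scaling ansatz for the pseudorelativistic Hartree model. Fix $v\in C^{\infty}_{c}(\R^{3},\C^{2})$ with $\|v\|_{L^{2}}=1$, set $v_{\sigma}(x):=\sigma^{3/2}v(\sigma x)$ for small $\sigma>0$, and take
\[
w_{\sigma}:=\fw^{-1}\begin{pmatrix}0\\ v_{\sigma}\end{pmatrix}\in X_{+}(D).
\]
Since $\fw$ acts on the Fourier side by a pointwise unitary $U(p)$ on $\C^{4}$, both the $L^{2}$ and the $H^{1/2}$ norms are preserved, so $w_{\sigma}\in\Sigma_{+}$ and $\|w_{\sigma}\|_{H^{1/2}}=\|v_{\sigma}\|_{H^{1/2}}$. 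Any $\psi\in\mathcal{X}_{W_{\sigma}}$ with $W_{\sigma}=\spann\{w_{\sigma}\}$ decomposes, up to a phase, as $\psi_{+}=(1-\|\psi_{-}\|^{2}_{L^{2}})^{1/2}w_{\sigma}$; substituting this together with the second estimate of Lemma \ref{lem:key2} applied to $v_{\sigma}$ into the definition of $\mathcal{I}^{(m)}$ gives
\begin{align*}
\mathcal{I}^{(m)}(\psi)\le{}&\|v_{\sigma}\|^{2}_{H^{1/2}}-m\tfrac{\cfstr}{2}D(v_{\sigma})+4m\cfstr\gamma_{K}\|\nabla v_{\sigma}\|^{2}_{L^{2}}\\
&-(1-5m\cfstr\gamma_{K})\bigl(\|v_{\sigma}\|^{2}_{H^{1/2}}\|\psi_{-}\|^{2}_{L^{2}}+\|\psi_{-}\|^{2}_{H^{1/2}}\bigr),
\end{align*}
where $D(u):=\iint|u(x)|^{2}|u(y)|^{2}/|x-y|\,dx\,dy$. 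Since $5m\cfstr\gamma_{K}<5/87<1$ uniformly in $m\in(0,1]$, the coefficient of the $\psi_{-}$ remainders is nonpositive and may be discarded, yielding
\[
\mathcal{E}^{(m)}(w_{\sigma})\le\|v_{\sigma}\|^{2}_{H^{1/2}}-m\tfrac{\cfstr}{2}D(v_{\sigma})+4m\cfstr\gamma_{K}\|\nabla v_{\sigma}\|^{2}_{L^{2}}.
\]

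It then remains the standard scaling computation. From $\hat v_{\sigma}(p)=\sigma^{-3/2}\hat v(p/\sigma)$ and the elementary inequality $\sqrt{1+t}\le 1+t/2$ one has $\|v_{\sigma}\|^{2}_{H^{1/2}}=\int\sqrt{\sigma^{2}|q|^{2}+1}\,|\hat v(q)|^{2}\,dq\le 1+\tfrac{\sigma^{2}}{2}\|\nabla v\|^{2}_{L^{2}}$, together with $\|\nabla v_{\sigma}\|^{2}_{L^{2}}=\sigma^{2}\|\nabla v\|^{2}_{L^{2}}$ and $D(v_{\sigma})=\sigma\,D(v)$, so that
\[
\mathcal{E}^{(m)}(w_{\sigma})\le 1+\sigma^{2}\bigl(\tfrac12+4m\cfstr\gamma_{K}\bigr)\|\nabla v\|^{2}_{L^{2}}-m\tfrac{\cfstr}{2}\,\sigma\,D(v).
\]
Since $D(v)>0$, the negative linear-in-$\sigma$ term dominates the positive quadratic one for $\sigma>0$ small enough, giving $e(m)\le\mathcal{E}^{(m)}(w_{\sigma})<1$. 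The only delicate bookkeeping is in the previous step, where the Dirac current $J$ and the negative-energy fluctuations $\psi_{-}$ produce several cross terms weighted by $\cfstr\gamma_{K}$: the smallness of the fine structure constant ($\cfstr\gamma_{K}<1/87$) is precisely what guarantees that $(1-5m\cfstr\gamma_{K})>0$ uniformly in $m\in(0,1]$ and allows the estimate to reduce cleanly to the Choquard-type scaling argument.
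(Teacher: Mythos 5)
Your argument is correct and follows essentially the same route as the paper: the lower bound via Lemma \ref{lem:apriori_estimates}, and the upper bound by testing with $w_\sigma=\fw^{-1}\left(\begin{smallmatrix}0\\ v_\sigma\end{smallmatrix}\right)$, invoking the second inequality of Lemma \ref{lem:key2}, discarding the nonpositive $\psi_-$ remainder, and using the Choquard-type scaling $D(v_\sigma)=\sigma D(v)$ versus $\|\nabla v_\sigma\|^2_{L^2}=\sigma^2\|\nabla v\|^2_{L^2}$ to make the linear term dominate. No gaps; the bookkeeping with the constants matches the paper's.
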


\begin{proof}
By Lemma \ref{lem:apriori_estimates} we have   $ \lambda_{W}(m) = \sup_{ \psi \in \mathcal{X}_{W}} \mathcal{I}^{(m)}  (\psi)  \geq (1-  \frac{\cfstr}{2}  \gamma_K)  $, hence in particular we get 
$e(m) \geq (1-  \frac{\cfstr}{2}  \gamma_K)  >0$,  for any $m \in (0,1]$. 

\medskip

Now,  since $\Lambda_{+} (D) =   \frac{1}{2} \fw^{-1}  ( \I_{4} - \beta) \fw $,  we  consider  $w = \fw^{-1} \left(  \begin{matrix} 0 \\ v  \end{matrix} \right)  \in \Sigma_{+}$,
 with $v \in H^{1}(\R^3, \C^2)$  and $\| v \|^2_{L^2} = 1$.  In view of Lemma \ref{lem:key2}, 
 since  $\| w \|^2_{H^{1/2}}  = \| v\|^2_{H^{1/2}} $  and $0 \leq  \| w \|^2_{H^{1/2}} - \|w \|^2_{L^2}  \leq \frac{1}{2} \| \nabla v \|^2_{L^2} $,    for any $\psi \in \mathcal{X}_W$  we have

\begin{align*}
	\mathcal{I}^{(m)} (\psi)   = &  \| \psi_{+} \|^2_{H^{1/2}} - \| \psi_{-}
	\|^2_{H^{1/2}}  
	-  m \frac{\cfstr}{2} \int_{\R^3 \times \R^3} \frac{ \rho_{\psi}
	(x) \rho_{\psi} (y)
 - J_{\psi}(x)
	\cdot J_{\psi} (y)} {|x-y |} dx dy \\
	 \leq& \| w \|^2_{H^{1/2}} - \|w \|^2_{L^2} - \| \psi_{-} \|^2_{L^{2}} \| w \|^2_{H^{1/2}} - \| \psi_{-}\|^2_{H^{1/2}}  + 1 \\
	&-     m \frac{\cfstr}{2} \int_{\R^3 \times \R^3}   \frac{ \rho_{v}(x)  \rho_{v}(y) } {|x - y|} dx  dy  \\
 & + 4 m \cfstr \gamma_{K}   \| \nabla v \|^2_{L^2}  + 5  m \cfstr \gamma_{K} (   \|\psi_{-} \|^2_{L^2} \|w\|^2_{H^{1/2} }  + \|  \psi_{-} \|^2_{H^{1/2}} )  \\
  \leq& 1+ \frac{1}{2} (1 +  8m  \cfstr \gamma_{K} )  \| \nabla v \|^2_{L^2}   -     m \frac{\cfstr}{2} \int_{\R^3 \times \R^3}   \frac{  \rho_{v}(x)   \rho_{v}(y) } {|x - y|} dx  dy  \\
 &- (1 -5  m \cfstr \gamma_{K} )(   \|\psi_{-} \|^2_{L^2} \|v\|^2_{H^{1/2} }  + \|  \psi_{-} \|^2_{H^{1/2}} )  . \\
 \end{align*}

Now,  for  any $\epsilon >0 $ we  consider    $v_{\epsilon} (x)=  \epsilon^{3/2} v (\epsilon |x|) $ and   $w_{\epsilon} = \fw^{-1} \left(  \begin{matrix}  0 \\  v_{\epsilon}  \end{matrix} \right)  \in \Sigma_{+}$, 
setting   $W_{\epsilon} = \spann{ w_{\epsilon}}$,  then  
	\begin{align*}	
e(m) -1 \leq   \sup_{\psi \in \mathcal{X}_{W_{\epsilon}}}  \mathcal{I}^{(m)}(\psi)  -1   \leq &   \,  \epsilon^2  \|\nabla v \|^2_{L^2} 	- \epsilon  \left(m  \frac{\cfstr}{2} \int_{\R^3 \times \R^3} \frac{ \rho_{v}
	(x) \rho_{v}(y) } {|x - y|} dx dy  \right),
		 \end{align*}	
hence , taking  $\epsilon >0 $ sufficiently small,   we may conclude that $e(m) - 1 < 0$.

\end{proof}

In view of the above  Lemma \ref{lem:key_negativa} and thanks to the estimate in (ii)-Proposition \ref{prop:sup_achieved}  we have the following result,   essential to the discussion of the minimization problem \eqref{eq:mimization} when using a concentration-compactness argument.

 \begin{prop}
\label{prop:subadditivity}
  $ E(m)$ satisfies the  strict  subadditivity condition
 \begin{equation*}
E(m) <   E(m_1) + E(m_2),  
\end{equation*}
for any $m \in (0,1]$  and   $m_1, m_2 \in (0,1) $ such that $   m_1 + m_2 = m $.  
\end{prop}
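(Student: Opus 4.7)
The plan is to establish a monotonicity-type inequality relating $E(m')$ to $E(m'')$ for $m' < m''$ in $(0,1]$, namely
\begin{equation*}
E(m') \;\geq\; \frac{(m')^2}{(m'')^2}\, E(m'') \;+\; \frac{m'(m''-m')}{m''},
\end{equation*}
and then apply it with $m'' = m = m_1 + m_2$ and $m' \in \{m_1, m_2\}$. Combined with the strict inequality $e(m) < 1$ from Lemma \ref{lem:key_negativa}, summing will yield the strict subadditivity by direct computation.

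To prove the displayed inequality, fix $w \in \Sigma_{+}$ and note that $\psi^{(m'')}(w) \in \mathcal{X}_W$, so the definition of $\mathcal{E}^{(m')}$, together with the fact that $\mathcal{I}^{(m')}$ and $\mathcal{I}^{(m'')}$ differ only in the coefficient of the (nonnegative) term $C(\psi)= \int_{\R^3 \times \R^3}(\rho_\psi \rho_\psi - J_\psi \cdot J_\psi)/|x-y|\,dx\,dy$, gives
\begin{equation*}
\mathcal{E}^{(m')}(w) \;\geq\; \mathcal{I}^{(m')}(\psi^{(m'')}(w)) \;=\; \mathcal{E}^{(m'')}(w) + (m''-m')\tfrac{\cfstr}{2}\, C(\psi^{(m'')}(w)).
\end{equation*}
The crucial step is then to bound $C(\psi^{(m'')}(w))$ from below. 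Writing $\mathcal{E}^{(m'')}(w) = \|\psi_+^{(m'')}(w)\|^2_{H^{1/2}} - \|\psi_-^{(m'')}(w)\|^2_{H^{1/2}} - m'' \tfrac{\cfstr}{2} C(\psi^{(m'')}(w))$ and using the estimate $\|\psi_+^{(m'')}(w)\|^2_{H^{1/2}} - \|\psi_-^{(m'')}(w)\|^2_{H^{1/2}} \geq 1$ from Proposition \ref{prop:sup_achieved}(ii) produces
\begin{equation*}
m''\tfrac{\cfstr}{2}\, C(\psi^{(m'')}(w)) \;\geq\; 1 - \mathcal{E}^{(m'')}(w).
\end{equation*}
Substituting this back yields $\mathcal{E}^{(m')}(w) \geq \tfrac{m'}{m''}\mathcal{E}^{(m'')}(w) + \tfrac{m''-m'}{m''}$, and taking the infimum over $w \in \Sigma_+$ gives $e(m') \geq \tfrac{m'}{m''} e(m'') + \tfrac{m''-m'}{m''}$, which multiplied by $m'$ is the displayed bound for $E$.

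Applying this inequality with $m'' = m$ and summing the two resulting bounds for $m' = m_1$ and $m' = m_2$, one obtains
\begin{equation*}
E(m_1) + E(m_2) - E(m) \;\geq\; \frac{m_1^2 + m_2^2 - m^2}{m^2}E(m) + \frac{2 m_1 m_2}{m} \;=\; \frac{2 m_1 m_2}{m}\bigl(1 - e(m)\bigr),
\end{equation*}
where in the last equality I used $m^2 - m_1^2 - m_2^2 = 2 m_1 m_2$ and $E(m)/m = e(m)$. By Lemma \ref{lem:key_negativa}, $1 - e(m) > 0$, so the right-hand side is strictly positive. The only delicate point in the whole argument is the lower bound on the Coulomb-type term $C(\psi^{(m'')}(w))$, for which Proposition \ref{prop:sup_achieved}(ii) (itself a consequence of the Foldy-Wouthuysen comparison in Lemma \ref{lem:key2}) is essential; the rest is purely algebraic, with the quantitative strict subadditivity being driven by the gap $1 - e(m) > 0$.
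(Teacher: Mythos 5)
Your proof is correct and follows essentially the same route as the paper: both arguments combine the maximizer comparison $\mathcal{I}^{(m')}(\psi^{(m'')}(w))\leq \mathcal{E}^{(m')}(w)$ with the lower bound $\|\psi_{+}^{(m'')}\|^2_{H^{1/2}}-\|\psi_{-}^{(m'')}\|^2_{H^{1/2}}\geq 1$ from Proposition \ref{prop:sup_achieved}(ii) and the strict bound $e(m)<1$ of Lemma \ref{lem:key_negativa}, and your affine inequality $e(m')\geq \tfrac{m'}{m''}e(m'')+\tfrac{m''-m'}{m''}$ is exactly the paper's relation $\theta(e(\theta m)-1)\leq\theta^2(e(m)-1)$ rewritten with $\theta=m''/m'$. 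The only cosmetic difference is that you keep the quantitative gap $\tfrac{2m_1m_2}{m}(1-e(m))$ explicit rather than passing through $E(\theta m)<\theta E(m)$.
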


\begin{proof}

  For any $\theta >1$  and $m \in (0,1)$ such that   $ \theta m \in (0, 1]$, by Proposition  \ref{prop:sup_achieved}-(ii),  for any $w \in \Sigma_{+}$ we have 
\begin{align*} 
 \theta    ( \mathcal{I}^{( \theta m)}(\psi^{(\theta m)} (w))  -  1  ) 
  \leq    \theta^2  ( \mathcal{I}^{(m)} (\psi^{(\theta m)} (w))  - 1 ) 
 	\leq     \theta^2 ( \mathcal{I}^{(m)} (\psi^{(m)} (w))  - 1 ) 
	\end{align*} 
		hence we get 
		$ \theta    (e ( \theta m)   -1 )  \leq      \theta^2  (	e ( m)   -1 ) $. 
		
	Since    by  lemma \ref{lem:key_negativa}    we have  $e ( m)   -1 < 0  $,    we   get $ \theta^2  (	e ( m)   -1 ) <  \theta  (	e (  m)   -1 )$, 
	namely $ \theta  e ( \theta m)     <   \theta  	e (  m)   $. 
	Therefore we may conclude that  $E(\theta m)  < \theta E(m)$.

Then,  for any $m_1, m_2 \in (0,1) $  such that $   m_1 + m_2 = m  \in (0,1] $, setting $\theta_i = \frac{m}{m_i} $,  then $ \theta_i >1 $ and $\theta_i m _i = m \in (0,1]$,  since 
$ \frac{1}{ \theta_{i}} E(\theta_{i} m_{i})  < E(m_{i})$,  for  $i=1,2$,   we may conclude that 
\begin{align*} 
 E( m)  = \frac{m_1}{m}  E( m) + \frac{m_2}{m}  E( m)  <  E(m_1) +  E( m_2).   
\end{align*} 
  		    \end{proof}

Now,   by Ekeland's variational
principle, there exists a Palais-Smale, minimizing sequence $\{ w_n \}
\subset \Sigma_+$, namely $\mathcal{E}(w_n) = \mathcal{I}(\psi(w_n))  \to E$ and $\|
d \mathcal{E}(w_n) \| \to 0$, then by Proposition  \ref{prop:sup_achieved}, the sequence 
 $\psi_n = \psi(w_n)$  satisfies 
 \begin{align*}
\sup_{\| h \|_{H^{1/2}} = 1 } | d \mathcal{I} (\psi_n) [h] -  2 \omega(\psi_n)\RE  \langle \psi_n  | h \rangle_{L^2}  | \to 0 .
\end{align*}
 Since  $ (1 - 4\cfstr \gamma_{K})  + o(1) \leq   \omega(\psi_n)  \leq  E + o(1)$    we have that  $\omega(\psi_n)   \to \omega \in (0,1) $ (up to subsequence)   and   since   $\| \psi_{-}(w_n) \|^2_{H^{1/2}} \leq \frac{\cfstr}{2}  \gamma_K   \|w_n \|^2_{H^{1/2}} $ by 
Lemma  \ref{lem:apriori_estimates} we have 
\begin{equation*}
 1= \|\psi_n \|^2_{L^2} \leq \|\psi_n \|^2_{H^{1/2} } \leq ( 1 + \frac{\cfstr}{2}  \gamma_K)   \|w_n \|^2_{H^{1/2}}  \leq \frac{ 1 + \frac{\cfstr}{2}  \gamma_K}{1-   \frac{\cfstr}{2}  \gamma_K} ( E + o(1)) . 
\end{equation*}
Therefore   $\{ \psi_n \} $ is a   Palais Smale sequence for the functional
\begin{equation*}
\mathcal{I}_{\omega} (\psi)=  \mathcal{I} (\psi) - \omega \|\psi \|^2_{L^2},
\end{equation*} 
 satisfying   
 \begin{equation*}
  0  < \inf_{n} \|\psi_n \|^2_{H^{1/2} }\leq   \sup_{n}  \|\psi_n \|^2_{H^{1/2} } < + \infty.
  \end{equation*}
  By  the classical  concentration-compactness principle (see \cite{Lions84}) we have 
  a precise characterization of the lack of compactness of bounded Palais Smale sequences of $\mathcal{I}_{w}$, as given 
 in Proposition 3.6 in \cite{GeorgievEstebanSere96}, namely  
  there exists a finite integer $p  \geq 1$, and 
$\phi_{1}, \dots , \phi_{p}  \in  H^{1/2}(\R^3, \C^4)$ non trivial critical points of $\mathcal{I}_{\omega} $, with $ \| \phi_{i} \|^2_{L^2} = m_i$,  
and $p$-sequences $\{ x_n^{i} \} \subset  \R^3$  ($i=1, \dots, p$)  such that    $| x_n^{i}  -  x_n^{j} | \to + \infty$ (for $i \not= j$), and, up to subsequence, 
\begin{equation}
\label{eq:cnvergenza_somma}
\| \psi_n - \sum_{i=1}^p \phi_{i}( \, \cdot \, - x_n^{i} ) \|_{H^{1/2}} \to 0 \qquad  \text{as} \quad n \to + \infty.
\end{equation}
hence, in particular, $ 1 = \| \psi_n \|^2_{L^2}   =  \sum_{i=1}^p m_i  .$

 Moreover, by  \eqref{eq:cnvergenza_somma}  we have 
\begin{align*}
& \langle \psi_n  - \sum_{i=1}^p \phi_{i}( \, \cdot \, - x_n^{i} ) |  (\psi_n)_{+}  - (\psi_n)_{-} \rangle_{H^{1/2}}   = o(1) \\
 & \int_{\R^3 \times \R^3} \frac{ \rho_{\psi_n}(x) ( \psi_n (y) , \psi_n  (y)-  \sum_{i=1}^p \phi_{i}( y - x_n^{i} ) )} {|x-y |} dx dy  = o(1)\\
  & \int_{\R^3 \times \R^3} \frac{ J_{\psi_n}(x)  \cdot (\al  \psi_n (y) , \psi_n  (y)-  \sum_{i=1}^p \phi_{i}( y - x_n^{i} ) )} {|x-y |} dx dy  = o(1).
\end{align*}
then,   since   $(\psi_n )_{\pm}( \, \cdot \, + x_n^{i} )  \rightharpoonup  ( \phi_{i})_{\pm} $ weakly in $H^{1/2}$, we get 
  \begin{align*}
  \| (\psi_n)_{+}  \|^2_{H^{1/2}}  & - \| (\psi_n)_{-} \|^2_{H^{1/2}} =  \langle \psi_n  - \sum_{i=1}^p \phi_{i}( \, \cdot \, - x_n^{i} ) |  (\psi_n)_{+}  - (\psi_n)_{-} \rangle_{H^{1/2}} \\
  & +   \sum_{i=1}^p  \langle \phi_{i} |  (\psi_n)_{+} ( \, \cdot \, + x_n^{i} ) \rangle_{H^{1/2}}   -   \sum_{i=1}^p  \langle \phi_{i}  |(\psi_n)_{-}  ( \, \cdot \, + x_n^{i} )\rangle_{H^{1/2}} \\
  =& \sum_{i=1}^p  \left( \| ( \phi_{i} )_{+}  \|^2_{H^{1/2}}   - \| ( \phi_{i} )_{-} \|^2_{H^{1/2}}  \right) + o(1), 
   \end{align*}
   and, by \eqref{eq:convergenceto0},
   \begin{align*}
   \int_{\R^3 \times \R^3}  \frac{ \rho_{\psi_n}
	(x) \rho_{\psi_n} (y) } {|x-y|}  dx dy  = & \int_{\R^3 \times \R^3} \frac{ \rho_{\psi_n}(x) ( \psi_n (y) , \psi_n  (y)-  \sum_{i=1}^p \phi_{i}( y - x_n^{i} ) )} {|x-y |} dx  dy \\
	&+  \sum_{i=1}^p   \int_{\R^3 \times \R^3} \frac{ \rho_{\psi_n}(x+ x_n^{i}) ( \psi_n(y + x_n^{i} ), \phi_{i}  (y)) } {|x - y|}  dx dy \\
	 =&   \sum_{i=1}^p   \int_{\R^3 \times \R^3} \frac{ \rho_{\phi_{i}}(x)  \rho_{\phi_{i}} (y) } {|x-y |} dx  dy  + o(1), \\
    \end{align*}
    analogously,  
     \begin{align*}
   \int_{\R^3 \times \R^3} &  \frac{ J_{\psi_n}
	(x)  \cdot J_{\psi_n} (y) } {|x-y|} dx dy \\
	 = & \int_{\R^3 \times \R^3} \frac{ J_{\psi_n}(x)  \cdot (\al  \psi_n (y) ,  \psi_n  (y)-  \sum_{i=1}^p \phi_{i}( y - x_n^{i} ) )} {|x - y|} dx dy \\
	&+  \sum_{i=1}^p   \int_{\R^3 \times \R^3} \frac{ J_{\psi_n}(x+ x_n^{i})  \cdot ( \psi_n(y + x_n^{i} ), \al  \phi_{i}  (y)) } {|x-y|} dx dy \\
	& =   \sum_{i=1}^p   \int_{\R^3 \times \R^3} \frac{ J_{\phi_{i}}(x) \cdot J_{\phi_{i}}  (y) } {|x-y |} dx dy  + o(1). \\
    \end{align*}

  which implies $ \mathcal{I}(\psi_n) =  \sum_{i=1}^p   \mathcal{I}(\phi_{i})  + o(1)$ and hence $E =  \sum_{i=1}^p   \mathcal{I}(\phi_{i}) $.

For any $i=1, \dots, p$, we define  $\psi_{i}  = \frac{\phi_{i}}{\|\phi_{i}\|_{L^2}} = \frac{\phi_{i}}{ \sqrt{m_{i}} }  \in \Sigma$ then we have 
\begin{equation*}
 \mathcal{I}(\phi_{i})  =  \mathcal{I}( \sqrt{m_{i}}  \psi_{i})  = m_{i}  \mathcal{I}^{(m_{i})} (  \psi_{i}) 
\end{equation*}
and
\begin{equation*}
0 = d  \mathcal{I}_{\omega}(\phi_{i}) [h] = \sqrt{ m_{i}} ( d\mathcal{I}^{(m_{i})}  (\psi_{i}) [h] -  2 \omega  \RE \langle \psi_{i} | h \rangle_{L^2} ),  \qquad \forall h \in H^{1/2}(\R^3, \C^4),  
 \end{equation*}
 namely, for any $i=1, \dots, p$, $\psi_{i} \in \Sigma$ is a critical point of $\mathcal{I}^{(m_{i})} $ on  $\Sigma$, with  the Lagrange multiplier $\omega \in (0,1)$.

Now, we have the following result,  interesting in itself. 

\begin{lem}
\label{lem:caratt_punti_critici}
Let $\psi \in \Sigma$ be such that 
\begin{equation*}
d\mathcal{I}^{(m)}  (\psi) [h] -  2 \omega  \RE \langle \psi | h \rangle_{L^2} =0 ,   \qquad \forall h \in H^{1/2}(\R^3, \C^4), 
\end{equation*}
for  $m \in (0,1] $ and $\omega \in (0,1)$. 
 Setting $ w =  \frac{\psi_{+}} {\|\psi_{+} \|_{L^2}}  \in \Sigma_{+}$,  then  $\psi = \psi^{(m)}(w)$  is the unique (up to a phase factor) maximizer of   $ \mathcal{I}^{(m)}$  on  $\mathcal{X}_{W}$, with $W = \spann \{ w \}$, namely 
 \begin{equation*}
\mathcal{I}^{(m)} (\psi)   = \mathcal{I}^{(m)} ( \psi^{(m)}(w))   = \sup_{ \phi \in \mathcal{X}_{W} } \mathcal{I}^{(m)}(\phi)  = \mathcal{E}^{(m)}(w).
 \end{equation*}
\end{lem}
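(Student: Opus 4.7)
The plan is to show that $\psi$ is a critical point of $\mathcal{I}^{(m)}$ on the submanifold $\mathcal{X}_W$ at a positive level and then to adapt the uniqueness argument from the proof of Proposition~\ref{prop:sup_achieved} to conclude $\psi=\psi^{(m)}(w)$ up to phase. First I would show $\psi_+\neq 0$ and $\mathcal{I}^{(m)}(\psi)>0$. Testing the Euler--Lagrange equation against $h=\psi$ and using $\|\psi\|_{L^2}^2=1$ together with the fact that the interaction term is quartic in $\psi$, one obtains
\[
\omega=\mathcal{I}^{(m)}(\psi)-m\frac{\cfstr}{2}\int_{\R^3\times\R^3}\frac{\rho_\psi(x)\rho_\psi(y)-J_\psi(x)\cdot J_\psi(y)}{|x-y|}\,dx\,dy,
\]
so $\mathcal{I}^{(m)}(\psi)\geq\omega>0$ by \eqref{eq:stimaGES}; and $\psi_+=0$ would force $\omega\leq-\|\psi_-\|_{H^{1/2}}^2<0$, contradicting $\omega>0$. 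Since the critical equation holds for every $h\in H^{1/2}$, in particular for $h\in W\oplus X_-(D)$ with $W=\spann\{w\}$, $\psi\in\mathcal{X}_W$ is a critical point of $\mathcal{I}^{(m)}|_{\mathcal{X}_W}$ at positive level with the same Lagrange multiplier $\omega$. By Proposition~\ref{prop:concavity}, both $\psi$ and the unique (up to phase) global maximizer $\psi^{(m)}(w)$ from Proposition~\ref{prop:sup_achieved} are strict local maxima of $\mathcal{I}^{(m)}$ on $\mathcal{X}_W$.

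Next I would establish an a priori bound on $\psi_-$ playing the role of Proposition~\ref{prop:sup_achieved}(iii). Testing the critical equation against $h=\psi_-$ gives
\[
\|\psi_-\|_{H^{1/2}}^2+\omega\|\psi_-\|_{L^2}^2=-m\cfstr\!\int_{\R^3\times\R^3}\frac{\rho_\psi(x)\RE(\psi,\psi_-)(y)-J_\psi(x)\cdot\RE(\psi,\al\psi_-)(y)}{|x-y|}\,dx\,dy,
\]
whose right-hand side I would bound in absolute value, using $|J_\psi|\leq\rho_\psi$, Cauchy--Schwarz in $y$, and \eqref{eq:key_estimate} with $\|\rho_\psi\|_{L^1}=1$, by $2m\cfstr\gamma_K\|\psi\|_{H^{1/2}}\|\psi_-\|_{H^{1/2}}$. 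Since $\omega>0$ and $\|\psi_+\|_{H^{1/2}}\leq\|w\|_{H^{1/2}}$, this yields $\|\psi_-\|_{H^{1/2}}\leq 2m\cfstr\gamma_K\|\psi\|_{H^{1/2}}$ and hence $\|\psi_-\|_{H^{1/2}}^2\leq m\frac{\cfstr}{2}\gamma_K\|w\|_{H^{1/2}}^2$, the same smallness estimate enjoyed by the maximizer.

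To conclude, I would argue by contradiction: suppose $\psi\neq\psi^{(m)}(w)$ up to phase. By $U(1)$-invariance, write $\psi=a(\eta_1)w+\eta_1$ and $\psi^{(m)}(w)=a(\eta_2)w+\eta_2$ with $a(\eta_i)=(1-\|\eta_i\|_{L^2}^2)^{1/2}>1/\sqrt{2}$ (from Proposition~\ref{prop:PalaisSmale}(ii), applied to the constant Palais--Smale sequences $\{\psi\}$ and $\{\psi^{(m)}(w)\}$) and both $\|\eta_i\|_{H^{1/2}}^2\leq m\frac{\cfstr}{2}\gamma_K\|w\|_{H^{1/2}}^2$. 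The path $g(t)=a(\eta(t))w+\eta(t)\in\mathcal{X}_W$ with $\eta(t)=t\eta_1+(1-t)\eta_2$ is well defined, and the computation in the uniqueness step of the proof of Proposition~\ref{prop:sup_achieved} gives $\mathcal{I}^{(m)}(g(t))\geq\tfrac{1}{2}(1-2\cfstr\gamma_K)\|w\|_{H^{1/2}}^2>0$ for all $t\in[0,1]$. Hence the min-max level $c^{(m)}=\sup_\gamma\min_t\mathcal{I}^{(m)}(\gamma(t))>0$, and Ekeland's variational principle together with the Palais--Smale condition (Proposition~\ref{prop:PalaisSmale}(iii)) produces a critical point of $\mathcal{I}^{(m)}|_{\mathcal{X}_W}$ at the positive level $c^{(m)}$; by Proposition~\ref{prop:concavity} this point would be a strict local maximum, contradicting its Mountain Pass characterization. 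This gives $\psi=\psi^{(m)}(w)$ up to phase, hence $\mathcal{I}^{(m)}(\psi)=\mathcal{E}^{(m)}(w)=\sup_{\phi\in\mathcal{X}_W}\mathcal{I}^{(m)}(\phi)$. The main obstacle is precisely the a priori $H^{1/2}$-bound on $\psi_-$ in the second paragraph: in the original uniqueness argument it came for free from the maximizer property (Proposition~\ref{prop:sup_achieved}(iii)), whereas here it must be extracted from the critical equation and the positivity of $\omega$.
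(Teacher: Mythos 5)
Your proposal is correct and follows essentially the same route as the paper: positivity of the level via $\omega=\frac12 d\mathcal{I}^{(m)}(\psi)[\psi]$, Proposition \ref{prop:concavity} to get a strict local maximum, an a priori smallness bound on $\|\psi_-\|_{H^{1/2}}$, and the Mountain Pass contradiction from the uniqueness step of Proposition \ref{prop:sup_achieved}. The only (immaterial) difference is that you extract the bound on $\psi_-$ by testing the critical equation against $\psi_-$, whereas the paper tests against $\psi_+-\psi_-$ to get $\|\psi_-\|^2_{H^{1/2}}\leq 2m\cfstr\gamma_K\|\psi_+\|^2_{H^{1/2}}$; both constants are small enough for the final argument.
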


\begin{proof}
Clearly   $\psi \in \mathcal{X}_{W}$ and it  is a critical point 
	for $\mathcal{I}^{(m)}$  on $\mathcal{X}_{W}$, moreover 
	
	\begin{equation*}
	 \mathcal{I}^{(m)}(\psi) \geq \frac{1}{2} d\mathcal{I}^{(m)}(\psi)[\psi] =  \omega   > 0.
	\end{equation*}
Therefore  by Proposition \ref{prop:concavity}  we have that 
  $\psi $ is a strict local maximum  for $\mathcal{I}^{(m)}$  on $\mathcal{X}_{W}$.   Moreover,  by  \eqref{eq:key_estimate}  and \eqref{eq:stimaGES} we have 
  \begin{align*}
  \|\psi_+\|^2_{H^{1/2}}  \geq & \omega (\|\psi_+\|^2_{L^2} -   \|\psi_-\|^2_{L^2} )  = \frac{1}{2} d\mathcal{I}^{(m)}(\psi)[\psi_{+} - \psi_{-}] =   \|\psi_+\|^2_{H^{1/2}} +   \|\psi_{-}\|^2_{H^{1/2}} \\
  &- m  \cfstr \int_{\R^3 \times \R^3} \frac{\rho_{\psi}(x) ( |\psi_{+} |^2 - |\psi_{-} |^2)(y)}{ |x-y|}  dx dy \\
		& + m  \cfstr \int_{\R^3 \times \R^3} \frac{J_{\psi}(x)
		\cdot( \RE (\psi_{+} , \al  \psi_{+} )  - \RE (\psi_{-} , \al  \psi_{-} ) (y)}{ |x-y|} dx dy \\
		\geq & \| \psi_{+} \|^2_{H^{1/2}} + \| \psi_{-} \|^2_{H^{1/2}} 
	 -  2 m  \cfstr \int_{\R^3 \times \R^3}
		\frac{\rho_{\psi}(x) |\psi_{+} |^2 (y)}{ |x-y|} dx dy \\
		\geq & (1 - 2m \cfstr \gamma_{K})  \| \psi_{+} \|^2_{H^{1/2}} +  \|\psi_{-}  \|^2_{H^{1/2}} , 
 \end{align*}
 that is  $ \|\psi_{-}  \|^2_{H^{1/2}} \leq  2m \cfstr \gamma_{K}  \| \psi_{+} \|^2_{H^{1/2}} $. 

  Now,  suppose that $\psi $ is not the (unique up to a phase factor) maximizer   of $\mathcal{I}^{(m)}$ on  $\mathcal{X}_{W}$, 
  then,  again by a contradiction  argument, applying the Mountain Pass theorem, as  in  the proof of  Proposition \ref{prop:sup_achieved},  
 we  find a contradiction, namely $\psi = \psi^{(m)}(w)$ and we may conclude that 
        
         \begin{equation*}
\mathcal{I}^{(m)} (\psi) =  \mathcal{I}( \psi^{(m)}(w)) = \sup_{ \psi \in \mathcal{X}_{W}} \mathcal{I}^{(m)}(\psi) . 
 \end{equation*}
 
  \end{proof}

Now  in view of Lemma \ref{lem:caratt_punti_critici}, setting $w_i = \frac{ (\psi_i)_{+}}{\| ( \psi_i)_{+} \|_{L^2}}  $ and   $W_{i} = \text{span} \, \{ w_{i} \} $, for any $i=1, \dots, p$,
 we have that  $\psi_{i} =  \psi^{(m_i)}(w_i)$  (as  in Proposition \ref{prop:sup_achieved}) and 

 \begin{equation*}
\mathcal{I}^{(m_i)} (\psi_i) =  \sup_{ \psi \in \mathcal{X}_{W_i}} \mathcal{I}^{(m_i)}(\psi)   = \mathcal{E}^{(m)} (w_{i})  \geq \inf_{ w \in \Sigma_{+}} 
	\mathcal{E}^{(m)} (w) =  e(m_i).
 \end{equation*}
 
Therefore we may conclude that 
\begin{equation*}
E =   \sum_{i=1}^p   \mathcal{I}(\phi_i)  =   \sum_{i=1}^p   m_{i}  \mathcal{I}^{(m_{i})} (  \psi_{i})  \geq   \sum_{i=1}^p   m_{i}   e(m_i) =      \sum_{i=1}^p  E(m_i) 
 \end{equation*}
  a contradiction with the strict subadditivity condition in  Proposition \ref{prop:subadditivity},  unless we have $p=1$,  that is 
  $\psi_n \to \psi_1 $ strongly in  $H^{1/2}$, hence   $\|\psi_1 \|^2_{L^2} =  m_1 = 1$ and   
  \begin{equation*}
\mathcal{I}(\psi_1) = E =  \inf_{\substack{ W \subset X_{+}(D) \\ \dim W = 1}} \,
	 \sup_{\substack{ \phi \in W \oplus X_{-}(D)  \\  \|\phi\|_{L^2} = 1}} \mathcal{I}(\phi).
\end{equation*}
Moreover 
  \begin{equation*}
  d\mathcal{I}  (\psi_{1}) [h] -  2 \omega  \RE \langle \psi_{1} | h \rangle_{L^2}   = 0  \qquad \forall h \in H^{1/2}(\R^3; \mathbb{C}^4),    
 \end{equation*}
namely $(\psi_1, \omega) \in H^{1/2}(\R^3,\mathbb{C}^4) \times (0,1) $ is a weak solution of \eqref{eq:eigenvalue_MD}. 

Finally, to prove that  $E$  is  the lowest positive  critical value  of the  functional  $\mathcal{I}$ on $\Sigma$,   suppose by contrary that there exists 
$0< \lambda < E $ 
and $\phi \in \Sigma$  such that $\mathcal{I}(\phi) = \lambda$ and 
\begin{equation*}
d\mathcal{I} (\phi) [h] -  2 \mu  \RE \langle \phi | h \rangle_{L^2} =0 ,   \qquad \forall h \in H^{1/2}(\R^3, \C^4)
\end{equation*}
with  $\mu \in \R$ the  Lagrange multiplier.  Since $\lambda >0$ we have that   $\mu > 0 $ (see (iii)-Proposition \ref{prop:PalaisSmale}) and clearly  $\mu \leq \lambda <1$.  
Then $\mu \in (0,1)$ and, setting $w =  \frac{\phi_{+}} {\|\phi_{+} \|_{L^2}}  \in \Sigma_{+}$, we apply Lemma \ref{lem:caratt_punti_critici} to conclude that 
  $\phi $  is the unique (up to a phase factor) maximizer of   $ \mathcal{I}$  on  $\mathcal{X}_{W}$, with $W = \spann \{ w \}$, that is 
 $\lambda = \mathcal{I} (\phi)   = \sup_{ \psi \in \mathcal{X}_{W} } \mathcal{I}(\psi)  \geq E$, 
a contradiction. 

\hfill $\Box$

\bigskip

As a byproduct of all the previous results,  with some minor changes,  we obtain  Theorem \ref{thm:polaron}. Let us briefly list the differences.

\medskip 
{\it Sketch of the proof of Theorem \ref{thm:polaron} :}
 In  the Coulomb-Dirac model  the  self-interaction is attractive, so that  we  formulate the (nonlinear)  eigenvalue problem using the operator  $H= -D$. Clearly 
  \begin{equation*}
  \Lambda_{\pm}(H) = \Lambda_{\mp}(D)  \qquad  \text{and} \qquad   X_{\pm}(H)  =  X_{\mp}(D), 
    \end{equation*}
  then one follows the proof of Theorem    \ref{thm:minmax}  simply by   exchanging the role of $\psi_{\pm}$ with $\psi_{\mp}$, 
   indeed  all the variational arguments  and all the lemmata proved can be carried out,  with no other changes, to deal with  the  functional $\mathcal{I}_{_{CD}}$.
  Note that  the term 
\begin{equation*}
\int_{\R^3 \times \R^3} \frac{ J_{\psi} (x)  \cdot J_{\psi}(y) } {|x - y|} dx dy
  \end{equation*}
is not  present in the functional $\mathcal{I}_{_{CD}}$, this in particular implies that  some of the estimates provided are in fact simplified.

\hfill $\Box$

\section*{Appendix : proof of  Lemma \ref{lem:key2}  }

 {\bf  Lemma \ref{lem:key2} } 

{\it For   any $\psi   = \psi_{+} + \psi_{-}  \in  \Sigma $,  let define $w = \frac{\psi_{+}}{ \| \psi_{+} \|_{L^2}}$    we have 
\begin{align*}
\int_{\R^3 \times \R^3}  &  \frac{ \rho_{\psi}
	(x) \rho_{\psi} (y)  -  J_{\psi}
	(x) \cdot  J_{\psi}(y)} {|x - y|}   dx dy  \geq 
	  \int_{\R^3 \times \R^3}   \frac{ \rho_{w}
	(x) \rho_{w} (y) -  J_{w} (x)  \cdot J_{w}(y)} {|x - z|} dx dy \\
	& -  8 \gamma_{K}   ( \|w\|^2_{H^{1/2} }- \|w\|^2_{L^2} )  - 10  \gamma_{K} (   \|\psi_{-} \|^2_{L^2} \|w\|^2_{H^{1/2} }  + \|  \psi_{-} \|^2_{H^{1/2}} ) .
\end{align*}
Moreover, if $v \in H^{1}(\R^3, \C^2)$,  with $\| v \|^2_{L^2} = 1$,  and  $\frac{\psi_{+}}{ \| \psi_{+} \|_{L^2}} = \fw^{-1} \left(  \begin{matrix} 0 \\ v  \end{matrix} \right)  $ we have 
\begin{align*}
\int_{\R^3 \times \R^3}  &  \frac{ \rho_{\psi}
	(x) \rho_{\psi} (y)  -  J_{\psi}(x) \cdot  J_{\psi}(y)} {|x - y|}   dx dy 
 \geq    \int_{\R^3 \times \R^3}   \frac{ \rho_{v}(x)  \rho_{v}(y) } {|x - y|} dx  dy  \\
 & -  8 \gamma_{K}   \| \nabla v \|^2_{L^2}  - 10  \gamma_{K} (   \|\psi_{-}\|^2_{L^2} \|v \|^2_{H^{1/2} }  + \|  \psi_{-} \|^2_{H^{1/2}} ) .
 \end{align*}}

\begin{proof}
 For any   $ \psi=   \psi_{+} + \psi_{-} \in \Sigma $ with $\psi_{+} =  (1- \|\psi_{-}\|^2_{L^2})^{1/2} w $,  we have 
 \begin{align*}
  \int_{\R^3 \times \R^3}  & \frac{ \rho_{\psi}
	(x) \rho_{\psi} (y) -    J_{\psi}
	(x) \cdot  J_{\psi}(y) } {|x - y|} dx dy 	\\
	=& (1 -  \|\psi_{-}\|^2_{L^2})^2 \int_{\R^3 \times \R^3} \frac{  \rho_{w}(x) \rho_{w} (y) -  J_{w} (x)  \cdot J_{w}(y) } {|x - y|} dx dy  \\
	& + 4 \int_{\R^3 \times \R^3} \frac{   \rho_{\psi_{+} } (x) \RE(\psi_{+}, \psi_{-}) (y) -  J_{\psi_{+} } (x) \cdot \RE(\psi_{+}, \al \psi_{-})(y) } {|x - y|} dx  dy \\
		&+2 \int_{\R^3 \times \R^3} \frac{  \rho_{\psi} (x) \rho_{\psi_{-}} (y) -    J_{\psi}(x) \cdot  J_{\psi_{-}}(y) } {|x - y|}  dx  dy  \\
	&- \int_{\R^3 \times \R^3} \frac{  \rho_{\psi_{-}} (x) \rho_{\psi_{-}} (y) -    J_{\psi_{-}}(x) \cdot  J_{\psi_{-}}(y ) } {|x- y|}  dx dy \\	
	& +4  \int_{\R^3 \times \R^3} \frac{ \RE(\psi_{+}, \psi_{-})(x)  \RE(\psi_{+}, \psi_{-}) (y)} {|x - y|}  dx dy \\
	& - 4  \int_{\R^3 \times \R^3} \frac{ \ \RE(\psi_{+}, \al \psi_{-})(x)  \RE(\psi_{+}, \al\psi_{-}) (y)} {|x - y|} dx  dy, 
	\end{align*}
	hence by  \eqref{eq:key_estimate}, \eqref{eq:positive_Fourier}, \eqref {eq:correntePositiva} and \eqref{eq:stimaGES}   we get
	\begin{align*}
	& \int_{\R^3 \times \R^3}   \frac{\rho_{\psi}
	(x)\rho_{\psi} (y) - J_{\psi}(x) \cdot  J_{\psi}(y) } {|x - y|} 
	 \geq  \int_{\R^3 \times \R^3} \frac{\rho_{w}(x) \rho_{w} (y) -  J_{w} (x)  \cdot J_{w}(y) } {|x - y|}   \\
	&  - 2   \gamma_{K}    \|\psi_{-}\|^2_{L^2}  \| w \|^2_{H^{1/2}}  -  \gamma_{K}   \|\psi_{-}\|^2_{L^2} \| \psi_{-} \|^2_{H^{1/2}}    - 4 \gamma_K  \| \psi_{-} \|_{H^{1/2} }  \|\psi_{-}\|_{L^2}  \| w \|_{H^{1/2}}  \\
& - 4   \big|  \int_{\R^3 \times \R^3}  \frac{   \rho_{w} (x)   \RE(w, \psi_{-}) (y)  } {|x - y|}   \big|   -  4   \big|  \int_{\R^3 \times \R^3}   \frac{  J_{w}(x)  \cdot  \RE (w , \al \psi_{-}) (y)    }  {|x - y|}    \big| .
	\end{align*}
Since $\Lambda_{\pm } (D) =   \frac{1}{2} \fw^{-1}  ( \I_{4} \mp \beta) \fw $,  we  set  $w = \fw^{-1} \left(  \begin{matrix} 0 \\ v  \end{matrix} \right)  $
and $\psi_{-}  = \fw^{-1}  \left(  \begin{matrix} \eta  \\ 0 \end{matrix} \right) $ with $v, \eta \in H^{1/2}(\R^3, \C^2)$ and $\| v \|^2_{L^2} = 1$, in view of Remark \ref{rem:conti_fourier} 
we have 
\begin{align*}
 \big| \int_{\R^3\times \R^3 }  &  \frac{  \rho_{w} (x)   \RE(w, \psi_{-}) (y)  } {|x - y|} dx dy \big|  =   (2 \pi)^{\frac{3}{2}}  \sqrt{\frac{2}{\pi} } \left| \int_{\R^3 } \frac{ \hat{ \rho}_{w} (p)  \mathcal{F} [  \RE(w, \psi_{-}) ](p) }{|p|^2} \, dp \right| \\
&\leq   \sqrt{\frac{2}{\pi} }  \| \rho_{w} \|_{L^1}   \int_{\R^3 } \frac{  | \mathcal{F} [ \RE(w, \psi_{-})  ] (p) |}{|p|^2} \ dp  \\
&\leq    \sqrt{\frac{2}{\pi} }   \int_{\R^3 } \frac{ 1}{|p|^2}  \left(  \frac{1}{ (2 \pi)^{\frac{3}{2}} }  \int_{\R^3}  |(\hat{w} (p-q) , \hat{\psi}_{-}(q)) | \, dq  \right)  dp . 
\end{align*}
Since 
$U^{-1}(p) = u_{+} (p) \I_{4} - u_{-} (p) \beta
    \frac{\al \cdot p}{|p|} $ with $u_{\pm}(p) = \sqrt{\frac{1}{2}(1 \pm \frac{1}{\lambda(p)})}$ we have
 \begin{align*}
  ( \hat{w}(p-q), & \hat{\psi}_{-} (q)) = \left(U^{-1}(p-q)  \left(  \begin{matrix} 0 \\ \hat{v}(p-q)  \end{matrix} \right) , U^{-1}(q)  \left(  \begin{matrix}  \hat{\eta}(q) \\ 0   \end{matrix} \right)  \right)  \\
   =&   - u_{+}(q)   u_{-}(p-q) \frac{p-q}{|p-q|}    \cdot  ( \boldsymbol{ \sigma} \hat{v}(p-q)  , \hat{\eta}(q) )        \\
   &+    u_{+}(p-q) u_{-}(q) \frac{q}{|q|}  \cdot  (\hat{v}(p-q)  , \boldsymbol \sigma  \hat{\eta}(q) )  \\
     =&    u_{-}(p-q) (  u_{+}(p-q)  \frac{q}{|q|} - u_{+}(q)     \frac{p-q}{|p-q|} ) \cdot  ( \boldsymbol{ \sigma} \hat{v}(p-q)  , \hat{\eta}(q) )        \\
  &+    u_{+}(p-q)  ( u_{-}(q)  -   u_{-}(p-q) )  \frac{q}{|q|}  \cdot  (\hat{v}(p-q)  , \boldsymbol \sigma  \hat{\eta}(q) ),  
  \end{align*}
and, since $ u_{-}(q) \geq  \frac{|q|}{2 \lambda(q) } $, we have 
	     \begin{align*}
 |u_{-} (q)-  u_{-}(p-q) |  
 &= \frac{ \frac{1}{2}  |\frac{1}{\lambda(q) } -\frac{1}{\lambda(p-q) }  |}{ u_{-}(q)+ u_{-}(p-q)}  
  \leq \frac{   |\lambda(p-q)  -\lambda(q)  |}{ |q| \lambda(p-q) +   |p-q| \lambda(q)  } \\
& =   \frac{| |q|^2  - |p-q|^2 |}{(|q| \lambda(p) +   |p-q| \lambda(q) ) (\lambda(q) + \lambda(p-q) )}\\
&  \leq    \frac{|p| }{  (\lambda(p-q) + 1 )^{1/2} (\lambda(q) + 1)^{1/2 } }. 
  \end{align*}
Then we get 
 \begin{align*}
 | ( \hat{w}(p-q),   \hat{\psi}_{-} (q))|  
  \leq & 2  u_{-}(p-q) |\hat{v}(p-q) ||\hat{\eta}(q) )|  
  +  |p |  \frac{ |\hat{v}(p-q) |}{(\lambda(p-q) + 1)^{\frac{1}{2} } } \frac{|\hat{\eta}(q) )|}{ (\lambda(q) + 1 )^{\frac{1}{2}}},
   \end{align*}
and  we obtain
\begin{align*}
 \big| \int_{\R^3\times \R^3 }  &  \frac{  \rho_{w} (x)   \RE(w, \psi_{-}) (y)  } {|x - y|}   dx dy \big|  
  \leq   \sqrt{\frac{2}{\pi} }       \int_{\R^3 } \frac{ 2}{|p|^2} \mathcal{F}  \,  \left[ \mathcal{F}^{-1} [u_{-}(p) |\hat{v}|]  \mathcal{F}^{-1} [|\hat{\eta}| ] \right] \, dp\\
    &+  \sqrt{\frac{2}{\pi} }   \int_{\R^3 }  \frac{ 1}{|p|}  \mathcal{F}  \left[ \mathcal{F}^{-1}  \left[ \frac{ |\hat{v}|}{ (\lambda(p) + 1)^{\frac{1}{2}  }} \right] \mathcal{F}^{-1} \left[ \frac{ |\hat{\eta}|}{ (\lambda(p) + 1)^{\frac{1}{2}  }} \right]  \right]  \, dp .
 \end{align*}
Now  by Kato's  inequality we have
 \begin{align*} 
   \sqrt{\frac{2}{\pi} }       \int_{\R^3 }  &\frac{ 2}{|p|^2} \mathcal{F}  \,  \left[ \mathcal{F}^{-1} [u_{-}(p) |\hat{v}|]  \mathcal{F}^{-1} [|\hat{\eta}| ] \right] \, dp
     =    \int_{\R^3 }   \frac{ 2}{|x|}  \mathcal{F}^{-1} [u_{-}(p) |\hat{v}|]  \mathcal{F}^{-1} [|\hat{\eta}| ] \, dx\\
       & \leq 2  \left\|  \frac{  \mathcal{F}^{-1} [u_{-}(p) |\hat{v}|] }{|x|^{\frac{1}{2} }}  \right\|_{L^2}  \left\|   \frac{  \mathcal{F}^{-1} [|\hat{\eta}| ] }{|x|^{\frac{1}{2} }}   \right\|_{L^2} 
           \leq \sqrt{2} \gamma_{K}  \|  (\lambda(p) - 1)^{ \frac{1}{2}  }   \hat{ v} \|_{L^2}  \|   \eta \|_{H^{1/2}} \\
            & \leq \frac{1}{2} \gamma_{K} (\|w\|^2_{H^{1/2}} - \|w \|^2_{L^2}) +  \gamma_{K}  \|  \psi_{-} \|^2_{H^{1/2}},  
 \end{align*}
 since $  \|  (\lambda(p) - 1)^{ \frac{1}{2}  }   \hat{v}  \|^2_{L^2}  = \|v\|^2_{H^{1/2}} - \|v \|^2_{L^2}  = \|w\|^2_{H^{1/2}} - \|w \|^2_{L^2} $.

 Moreover,  by Hardy's   inequality we have
\begin{align*} 
 \sqrt{\frac{2}{\pi} }  & \int_{\R^3 }   \frac{ 1}{|p|}  \mathcal{F}  \left[ \mathcal{F}^{-1} \left[ \frac{ |\hat{v}|}{ (\lambda(p) + 1)^{\frac{1}{2}  }} \right] \mathcal{F}^{-1} \left[ \frac{ |\hat{\eta}|}{ (\lambda(p) + 1)^{\frac{1}{2}  }} \right]  \right]  \, dp \\
 = &   \frac{2}{\pi}   \int_{\R^3 }  \frac{ 1}{|x|^2}    \mathcal{F}^{-1} \left[ \frac{ |\hat{v}|}{ (\lambda(p) + 1)^{\frac{1}{2}  }} \right] \mathcal{F}^{-1} \left[ \frac{ |\hat{\eta}|}{ (\lambda(p) + 1)^{\frac{1}{2}  }} \right]  \, dx \\
 \leq &   \frac{2}{\pi}  \left\|  \frac{ 1}{|x|}    \mathcal{F}^{-1} \left[ \frac{ |\hat{v}|}{ (\lambda(p) + 1)^{\frac{1}{2}  }} \right]   \right\|_{L^2}  \left\|   \frac{ 1}{|x|}   \mathcal{F}^{-1} \left[ \frac{ |\hat{\eta}|}{ (\lambda(p) + 1)^{\frac{1}{2}  }} \right] \right\|_{L^2}  \\
    \leq & \frac{8}{ \pi}  \left\| \frac{ |p|  |\hat{v}| }{ (\lambda(p) + 1)^{\frac{1}{2}  } }     \right\|_{L^2}  \left\|   \frac{ |p|  |\hat{\eta}| }{ (\lambda(p) + 1)^{\frac{1}{2}  } }    \right\|_{L^2}  
      \leq  \gamma_{K} (\|w\|^2_{H^{1/2}} - \|w \|^2_{L^2} ) +  \gamma_{K}\|  \psi_{-} \|^2_{H^{1/2}}
\end{align*}
since $ \left\| \frac{ |p|  |\hat{v}| }{ (\lambda(p) + 1)^{\frac{1}{2}  } }     \right\|^2_{L^2}  = \|  (\lambda(p) - 1)^{ \frac{1}{2}  }   \hat{v}  \|^2_{L^2}   $ and  $ \gamma_{K} = \frac{\pi}{2}$.

Analogously we have
  \begin{align*}
   \big|  \int_{\R^3 \times \R^3}  & \frac{   \RE (w , \al \psi_{-}) (x) \cdot   J_{w}(y) }  {|x - y|} dx  dy   \big|  \\
 &=    (2 \pi)^{\frac{3}{2}}  \sqrt{\frac{2}{\pi} } \left| \int_{\R^3 } \frac{  \mathcal{F} [  \RE(w, \al  \psi_{-}) ](p) \cdot  \mathcal{F} [  \RE(w, \al  w) ](p)  }{|p|^2} \, dp \right| \\
& \leq   \sqrt{\frac{2}{\pi} }   \| (w , \al \psi_{-}) \|_{L^1}    \int_{\R^3 } \frac{  | \mathcal{F} [ \RE(w,  \al w)  ] (p) |}{|p|^2} \ dp  \\
& \leq   \sqrt{\frac{2}{\pi} }  \|w \|_{L^2}  \|\psi_{-}\|_{L^2}   \int_{\R^3 } \frac{ 1}{|p|^2} \left(  \frac{1}{ (2 \pi)^{\frac{3}{2}} } \int_{\R^3}  | (\hat{w} (p-q) ,  \al \hat{w}(q)) |  \, dq  \right) \ dp ,  
  \end{align*}
and   since 
   \begin{align*}
 |  ( \hat{w}(p-q),  \al \hat{w}(q))| 
& \leq   (u_{+} (p-q)  u_{-}(q)   +  u_{+} (q)  u_{-}(p-q)  )  |\hat{v}(q)| |\hat{v}(p-q) |  \\
 &  \leq   (u_{-}(q)   +   u_{-}(p-q)  )  |\hat{v}(q)| |\hat{v}(p-q) | , 
   \end{align*}
then by Kato's inequality   we have
\begin{align*}
   \big|  \int_{\R^3 \times \R^3}  & \frac{   \RE (w , \al \psi_{-}) (x) \cdot   J_{w}(y) }  {|x - y|}  dx  dy   \big|  \\
 & \leq  2 \sqrt{\frac{2}{\pi} }     \|\psi_{-}\|_{L^2}  \int_{\R^3 } \frac{ 1}{|p|^2}   \mathcal{F}  \,  \left[ \mathcal{F}^{-1} [u_{-}(p) |\hat{v}|]  \mathcal{F}^{-1} [|\hat{v}| ] \right]\ dp  \\
  & \leq  2 \|\psi_{-}\|_{L^2}   \left	\|  \frac{  \mathcal{F}^{-1} [u_{-}(p) |\hat{v}|]  }{|x|^{\frac{1}{2} }} \right\|_{L^2}  \left\|   \frac{  \mathcal{F}^{-1} [|\hat{v}| ]}{|x|^{\frac{1}{2} }}    \right\|_{L^2} \\
 & \leq   \sqrt{2} \gamma_{K}  \|  (\lambda(p) - 1)^{ \frac{1}{2}  }    \hat{v} \|_{L^2}  \|\psi_{-} \|_{L^2}  \|  v \|_{H^{1/2}}   \\
  & \leq   \frac{1}{2} \gamma_{K}   (\|w\|^2_{H^{1/2}} - \|w \|^2_{L^2} )  + \gamma_{K}  \|\psi_{-} \|^2_{L^2}  \|  w \|^2_{H^{1/2}} .  
 \end{align*}    
   Therefore we may conclude that 
\begin{align*}
	 \int_{\R^3 \times \R^3}   & \frac{ \rho_{\psi}
	(x) \rho_{\psi} (y) -    J_{\psi}
	(x) \cdot  J_{\psi}(y) } {|x-y|} dx dy  -  \int_{\R^3 \times \R^3} \frac{  \rho_{w}(x) \rho_{w} (y) -  J_{w} (x)  \cdot J_{w}(y) } {|x-y|} dx dy    \\
	\geq &-  8\gamma_{K}  (\|w\|^2_{H^{1/2}} - |w |^2_{L^2}) - 10\gamma_{K}  \|  \psi_{-} \|^2_{H^{1/2}}   - 10 \gamma_{K}   |\psi_{-}|^2_{L^2}  \|  w \|^2_{H^{1/2}} .  
	\end{align*}
Moreover,  by  \cite[Lemma 2.1]{GeorgievEstebanSere96}  we have that 
\begin{align*}
 \int_{\R^3 \times \R^3} \frac{  \rho_{w}(x) \rho_{w} (y) -  J_{w} (x)  \cdot J_{w}(y) } {|x-y|} dx dy   \geq  \int_{\R^3 \times \R^3}& \frac{ (w, \beta w)(x)  (w, \beta w)(y) } {|x - y|} dx dy .
\end{align*}
Now  for    $w = \fw^{-1} \left(  \begin{matrix} 0 \\ v  \end{matrix} \right) $  with $v \in H^1(\R^3, \C^2)$ and $\|v \|^2_{L^2} = 1$,  we have 

\begin{align*}
 (w, \beta w)(x) 
   = & \left( \left( \begin{matrix}  \mathcal{F}^{-1} [ u_{-}(p) \frac{p  }{|p|}   \cdot \boldsymbol \sigma \hat{v} ]  \\  \mathcal{F}^{-1}  [u_{+}(p) \hat{v} ]  \end{matrix}  \right),   
   \left( \begin{matrix}   \mathcal{F}^{-1} [ u_{-}(p) \frac{p  }{|p|}   \cdot \boldsymbol \sigma \hat{v} ]  \\ - \mathcal{F}^{-1}  [u_{+}(p) \hat{v} ]  \end{matrix}  \right) \right)(x) \\
   = &  | \mathcal{F}^{-1} [ u_{-}(p) \frac{p  }{|p|}   \cdot \boldsymbol \sigma \hat{v} ]|^2(x) -  |  \mathcal{F}^{-1}  [u_{+}(p) \hat{v} ] |^2(x)  = |\xi|^2(x) - |f|^2(x) 
    \end{align*}
 
where $f =  \mathcal{F}^{-1}  [u_{+}(p) \hat{v} ] $ and $ \xi =    \mathcal{F}^{-1} [ u_{-}(p) \frac{p  }{|p|}   \cdot \boldsymbol \sigma \hat{v} ] $, then we have 

\begin{align*}
 \int_{\R^3 \times \R^3}& \frac{ (w, \beta w)(x)  (w, \beta w)(y) } {|x - y|}  dx dy  = 
  \int_{\R^3 \times \R^3} \frac{ |f|^2(x)  |f|^2(y) } {|x - y|}  dx dy \\
  &+    \int_{\R^3 \times \R^3} \frac{ |\xi|^2(x)  |\xi|^2(y) } {|x - y|} dx  dy   - 2 \int_{\R^3 \times \R^3} \frac{ |f|^2(x)  |\xi|^2(y) } {|x - y|}  dx dy \\
  \geq &   \int_{\R^3 \times \R^3} \frac{ |f|^2(x)  |f|^2(y) } {|x - y|}  dx dy  -  2 \int_{\R^3 \times \R^3} \frac{ |f|^2(x)  |\xi|^2(y) } {|x - y|} dx dy \\
    \geq &   \int_{\R^3 \times \R^3} \frac{ |f|^2(x)  |f|^2(y) } {|x - y|}  dx dy  -  2 \gamma_K  \| f\|^2_{L^2}  \| \xi \|^2_{H^{1/2} } . 
 \end{align*}
Morover,  setting $\chi =\mathcal{F}^{-1}  [(1 - u_{+}(p) )\hat{v} ] $,   since   $f = v  -  \chi $, by \eqref{eq:positive_Fourier} we have 
\begin{align*}
   \int_{\R^3 \times \R^3} &  \frac{ |f|^2(x)  |f|^2(y) } {|x - y|}  dx dy  =    \int_{\R^3 \times \R^3}   \frac{ |v|^2(x)  |v|^2(y) } {|x - y|}  dx dy  \\
   & -  4 \int_{\R^3 \times \R^3} \frac{  |v|^2 (x) \RE(v, \chi) (y)  } {|x- y |}  dx dy 
		+2 \int_{\R^3 \times \R^3} \frac{  |f|^2(x) |\chi|^2 (y)  } {|x - y|}  dx dy  \\
	&- \int_{\R^3 \times \R^3} \frac{  |\chi|^2 (x) |\chi|^2 (y) } {|x- y |}  dx dy  	
	 +4  \int_{\R^3 \times \R^3} \frac{ \RE(v, \chi)(x)  \RE(v, \chi) (y)} {|x - y|} dx  dy \\ \geq &   \int_{\R^3 \times \R^3}   \frac{ |v|^2(x)  |v|^2(y) } {|x - y|}  dx dy   -  \int_{\R^3 \times \R^3} \frac{  |\chi|^2 (x) |\chi|^2 (y) } {|x- y |}  dx  dy  \\
  &  -  4 \left|  \int_{\R^3 \times \R^3} \frac{  |v|^2 (x) \RE(v, \chi) (y)  } {|x- y |}  dx dy  \right|. 
  \end{align*}

	Since  by  Hardy's inequality we have
\begin{align*}		
\big|  \int_{\R^3 \times \R^3}  &\frac{  |v|^2 (x) \RE(v, \chi) (y)  } {|x- y |}  dx  dy  \big|  \leq 2  \|v\|^2_{L^2}  \|  \chi \|_{L^2}  \| \nabla v \|_{L^2} ,  
  \end{align*}

then, 	 since $\gamma_K = \frac{\pi}{2}$,  by \eqref{eq:key_estimate}  we get 

\begin{align*}	
 \int_{\R^3 \times \R^3}   \frac{ |f|^2(x)  |f|^2(y) } {|x - y|}  dx  dy 
	&   \geq   \int_{\R^3 \times \R^3}   \frac{ |v|^2(x)  |v|^2(y) } {|x - y|}  dx dy  \\
	   & - \gamma_{K}  \| \chi \|^2_{L^2} \|\chi \|^2_{H^{1/2}}   -   \frac{16}{\pi} \gamma_K      \|  \chi \|_{L^2}  \| \nabla v \|_{L^2}  .
			 	  \end{align*}
				Moreover  we have 
\begin{align*}
\| \xi \|^2_{H^{1/2} }  =& \| \lambda(p)^{1/2} | u_{-}(p) \frac{p  }{|p|}   \cdot \boldsymbol \sigma \hat{v} \|^{2}_{L^2}  = \| \lambda(p)^{1/2}  u_{-}(p) \hat{v} \|^{2}_{L^2}  	\\	
=& \frac{1}{2}  (\| v \|^2_{H^{1/2} }  -   \|v \|^2_{L^2}  )  \leq  \frac{1}{4}  \|\nabla v \|^2_{L^2}
\end{align*}
and  $ \|f\|^2_{L^2}  \leq  \|v\|^2_{L^2} = 1$, and since
\begin{equation*}
		|u_{+} (p) - 1| = \frac{ 1 - u_{+} (p)^2}{1+ u_{+} (p)} = \frac{  u_{-} (p)^2}{1+ u_{+} (p)}  \leq u_{-} (p)^2 \leq \frac{1}{2} \frac{ |p|^2} {\lambda(p) +1 } \leq \frac{1}{2}  |p|
			\end{equation*}
we have $ \|\chi \|^2_{L^2}  \leq \frac{1}{4}  \|v\|^2_{L^2} = \frac{1}{4} $, but   also 
\begin{equation*}
\| \chi \|^2_{L^2}  = \| (u_{+}(p)  - 1) \hat{v} \|^2_{L^2}   \leq \frac{1}{4}  \|  \nabla v \|^{2}_{L^2}  
\end{equation*}
and 
\begin{align*}
\| \chi \|^2_{H^{1/2} }  = &\| \lambda(p)^{1/2} |(u_{+}(p)  - 1)\hat{v} | \|^{2}_{L^2}  \leq \| \lambda(p)^{1/2}  u_{-}(p) \hat{v} \|^{2}_{L^2}  \\
= & \frac{1}{2}  (\| v \|^2_{H^{1/2} }  -   \|v \|^2_{L^2}  )   \leq \frac{1}{4}  \|  \nabla v \|^{2}_{L^2}  .
\end{align*}
Therefore, we may conclude that 
\begin{align*}
 \int_{\R^3 \times \R^3}   \frac{ (w, \beta w)(x)  (w, \beta w)(y) } {|x - y|}  &dx dy     \geq    \int_{\R^3 \times \R^3} \frac{ |f|^2(x)  |f|^2(y) } {|x - y|} dx dy   -  2 \gamma_K   \| \xi \|^2_{H^{1/2} } \\
    \geq &  \int_{\R^3 \times \R^3}   \frac{ |v|^2(x)  |v|^2(y) } {|x - y|}  dx dy 
     - \gamma_{K}  \| \chi \|^2_{L^2} \|\chi \|^2_{H^{1/2}}  \\
     & -   \frac{16}{\pi} \gamma_K   \|  \chi \|_{L^2}  \| \nabla v \|_{L^2}  -  2 \gamma_{K}  \| \xi \|^2_{H^{1/2} } \\
    \geq &   \int_{\R^3 \times \R^3}   \frac{ |v|^2(x)  |v|^2(y) } {|x - y|}  dx dy   -   4 \gamma_{K}   \|  \nabla v \|^{2}_{L^2}  .  \\
	  \end{align*}

\end{proof}

\section*{Acknowledgement}
The author is  grateful to Vittorio Coti Zelati for useful discussions.

\providecommand{\bysame}{\leavevmode\hbox to3em{\hrulefill}\thinspace}
\providecommand{\MR}{\relax\ifhmode\unskip\space\fi MR }
\providecommand{\MRhref}[2]{%
  \href{http://www.ams.org/mathscinet-getitem?mr=#1}{#2}
}
\providecommand{\href}[2]{#2}

    \end{document}